\begin{document}
\newcommand{\eq}[2][label]{\begin{equation}\label{#1}#2\end{equation}}
\newcommand{\ci}[1]{_{ {}_{\scriptstyle #1}}}
\newcommand{\ti}[1]{_{\scriptstyle \text{\rm #1}}}

\newcommand{\norm}[1]{\ensuremath{\|#1\|}}
\newcommand{\abs}[1]{\ensuremath{\vert#1\vert}}
\newcommand{\nm}{\,\rule[-.6ex]{.13em}{2.3ex}\,}

\newcommand{\p}{\ensuremath{\partial}}
\newcommand{\pr}{\mathcal{P}}

\newcounter{vremennyj}

\newcommand\cond[1]{\setcounter{vremennyj}{\theenumi}\setcounter{enumi}{#1}\labelenumi\setcounter{enumi}{\thevremennyj}}

\newcommand{\pbar}{\ensuremath{\bar{\partial}}}
\newcommand{\db}{\overline\partial}
\newcommand{\D}{\mathbb{D}}
\newcommand{\R}{\mathbb{R}}
\newcommand{\T}{\mathbb{T}}
\newcommand{\C}{\mathbb{C}}
\newcommand{\N}{\mathbb{N}}
\newcommand{\bP}{\mathbb{P}}

\newcommand\cE{\mathcal{E}}
\newcommand\cP{\mathcal{P}}
\newcommand\cC{\mathcal{C}}
\newcommand\cH{\mathcal{H}}
\newcommand{\cU}{\mathcal{U}}

\newcommand{\be}{\mathbf{e}}

\newcommand{\la}{\lambda}

\newcommand{\td}{\widetilde\Delta}

\newcommand{\tto}{\!\!\to\!}
\newcommand{\wt}{\widetilde}
\newcommand{\shto}{\raisebox{.3ex}{$\scriptscriptstyle\rightarrow$}\!}

\newcommand{\La}{\langle }
\newcommand{\Ra}{\rangle }
\newcommand{\ran}{\operatorname{Ran}}
\newcommand{\tr}{\operatorname{tr}}
\newcommand{\codim}{\operatorname{codim}}
\newcommand\clos{\operatorname{clos}}
\newcommand{\spn}{\operatorname{span}}
\newcommand{\vf}{\varphi}
\newcommand{\f}[2]{\ensuremath{\frac{#1}{#2}}}


\newcommand{\entrylabel}[1]{\mbox{#1}\hfill}

\newenvironment{entry}
{\begin{list}{X}%
  {\renewcommand{\makelabel}{\entrylabel}%
      \setlength{\labelwidth}{55pt}%
      \setlength{\leftmargin}{\labelwidth}
      \addtolength{\leftmargin}{\labelsep}%
   }%
}%
{\end{list}}



\numberwithin{equation}{section}

\newtheorem{thm}{Theorem}[section]
\newtheorem*{thm*}{Theorem}
\newtheorem{lm}[thm]{Lemma}
\newtheorem{cor}[thm]{Corollary}
\newtheorem{prop}[thm]{Proposition}

\theoremstyle{remark}
\newtheorem{rem}[thm]{Remark}
\newtheorem*{rem*}{Remark}
\newtheorem*{tks*}{Acknowledgements}

\title[]{New estimates for the Beurling-Ahlfors operator on differential forms}

\author[S. Petermichl]{Stefanie Petermichl$^\dagger$}

\address{Stefanie Petermichl, D\'epartement de Math\'ematiques\\ Universit\'e de Bordeaux 1\\351, Cours de la Lib\'eration\\ 33405 Talence, France}
\email{stefanie@math.u-boreaux1.fr}
\thanks{$\dagger$ Research supported in part by a National Science Foundation DMS grant.}

\author[L. Slavin]{Leonid Slavin$^\ddagger$}

\address{Leonid Slavin, Department of Mathematics\\ University of Missouri, Columbia\\ Columbia, MO USA 65211}
\email{leonid@math.missouri.edu}
\thanks{$\ddagger$ Research supported in part by a National Science Foundation DMS grant.}

\author[B. D. Wick]{Brett D. Wick$^\ast$}

\address{Brett D. Wick, Department of Mathematics\\ University of South Carolina\\ Columbia, SC USA 29208}
\email{wick@math.sc.edu}
\thanks{$\ast$ Research supported in part by a National Science Foundation DMS grant.}

\subjclass[2000]{Primary  42BXX ; Secondary 42B15, 42B20}

\date{\today}

\keywords{Beurling--Ahlfors Operator, Differential Forms, Heat Extensions, Bellman Functions}

\begin{abstract}
We establish new $p$-estimates for the norm of the generalized Beurling--Ahlfors transform $\mathcal{S}$ acting on form-valued functions. Namely, we prove that\linebreak $\norm{\mathcal{S}}_{L^p(\R^n;\Lambda)\to L^p(\R^n;\Lambda)}\leq n(p^{*}-1)$ where $p^*=\max\{p, p/(p-1)\},$ thus extending the recent Nazarov--Volberg estimates to higher dimensions. The even-dimensional case has important implications for quasiconformal mappings.  Some promising prospects for further improvement are discussed at the end.
\end{abstract}

\maketitle

\setcounter{tocdepth}{1}
\tableofcontents
\section*{Common notation}
\begin{tabular}{ll}
$:=$& equal by definition;\\
$E$& a Hilbert space, fixed throughout;\\
$\langle\cdot,\cdot\rangle_E$& the inner product in $E$;\\
 $\norm{\cdot}_E$& the norm on $E$ induced by $\langle\cdot,\cdot\rangle_E$;\\
$\abs{\cdot}$& either the absolute value or the Euclidean length in $\R^n$,\\
&clear from context;\\
$\partial_i$& the partial derivative with respect to $x_i,$ $\f{\partial}{\partial x_i}$;\\
$\partial_t$& the partial derivatives with respect to $t,$ $\f{\partial}{\partial t}$;\\
$C_c^\infty(\R^n;E)$& the space of infinitely differentiable compactly supported\\
&$E$-valued functions on $\R^n$;\\
$L^p(\R^n;E)$& the Lebesgue classes of $E$-valued functions on $\R^n$.
\end{tabular}
\section{Introduction}
\label{s1}
Singular integrals have historically played a very important role in the
theory of partial differential equations, for example in questions
regarding the higher integrability of weak solutions. In recent years the
importance of computing or estimating the the exact norms in $L^p$ of
singular integral operators has become clear.
In this light, it has been a long standing problem to find the exact
operator norm of the Beurling transform in scalar $L^p$.

The operator $\mathcal{T}:L^p(\C)\to L^p(\C),$ defined by
$$
\mathcal{T}(f)(z):=-\f{1}{\pi}\int_{\C}\frac{f(w)}{(w-z)^2}dA(w),
$$
is called the (planar) Beurling--Ahlfors transform.  This operator has the property that it turns $\bar z$-derivatives into $z$-derivatives:
$\mathcal{T}\left(\frac{\partial f}{\partial \bar z}\right) = \frac{\partial f}{\partial z}$.

The long standing conjecture of T. Iwaniec \cite{Iwan} is that, for $1<p<\infty,$ 
$$
\norm{\mathcal{T}}_{L^p(\mathbb{C})\to L^p(\mathbb{C})}=p^*-1,
$$
where
$$
p^*=\max\left\{p,\f{p}{p-1}\right\}.
$$

This problem has a long history, and has been reappearing in many papers on regularity and integrability of quasiregular maps.  Computing the exact norm of the operator acting on $L^p(\C)$ is important, since estimates of the norm yield information about the best integrability and minimal regularity of solutions to the Beltrami equation for example.  A more detailed description of the benefit of finding these sharp estimates can be found in \cite{PetermichlVol} and the references therein.

The operator $\mathcal{T}$ is a Calder\'on--Zygmund operator, and so is bounded on all $L^p(\C)$, but the standard methods of harmonic analysis fail to give good estimates of its norm. In search for new, more efficient tools, sharp Burkholder inequalities for martingales and the Bellman function technique have been shown to be very useful.
 
In 2000, F. Nazarov and A. Volberg, see \cite{VolNaz}, showed, using a special Bellman function, that $\norm{\mathcal{T}}_{p\to p}\le2(p^{*}-1)$.  Using martingale transforms, this result was duplicated by Ba\~nuelos and M\'endez-Hern\'andez \cite{BanMen}.  Recently, using a more refined version of these methods, Ba\~nuelos and Janakiraman \cite{BanPrabhu} were able to reduce the constant $2$ to approximately $1.575,$ the best known estimate to date.

There is a natural generalization of the planar Beurling--Ahlfors transform to differential forms on $\mathbb{R}^n.$ Just as the planar operator $\mathcal{T}$ on the complex plane determines many geometric and function-theoretic properties of quasiconformal mappings, estimates for the generalized Beurling--Ahlfors operator $\mathcal{S}$ on forms have applications in the theory of quasiregular mappings, where one computes the exponent of integrability, the distortion of the Hausdorff dimension under the images of these mappings, and the dimension of the removable singular set of the quasiregular map. These important applications are a reason the operator $\mathcal{S}$ has drawn continued interest.

T. Iwaniec and G. Martin studied $\mathcal{S}$ in \cite{IwanMart1}. They developed a complex `method of rotations' to deal with the even kernel of the operator and showed that 
$$
\norm{\mathcal{S}}_{L^p(\R^n;\Lambda)\to L^p(\R^n;\Lambda)}\le(n+1)
\norm{\mathcal{T}}_{L^p(\mathbb{C})\to L^p(\mathbb{C})}.
$$
They also conjectured that
\eq[e1]{
\norm{\mathcal{S}}_{L^p(\R^n;\Lambda)\to L^p(\R^n;\Lambda)}=(p^*-1).
}
It is well known that $\norm{\mathcal{S}}_{p\to p}\geq(p^*-1)$, but the estimate from above has proved elusive, just as it has in the planar case. Until now, the best known estimate was due to Ba\~nuelos and Lindeman \cite{BanLind}. Using Burkholder-type inequalities for martingale transforms, they showed that, under certain assumptions on $n,$ 
\eq[e2]{
\norm{\mathcal{S}}_{L^p(\R^n;\Lambda)\to L^p(\R^n;\Lambda)}\leq C(n)(p^*-1)
} 
with $C(n)=\frac43n-2.$

In this paper, we employ the line of reasoning developed by Petermichl and Volberg in a weighted setting \cite{PetermichlVol} and followed by Nazarov and Volberg in \cite{VolNaz}.  This line of investigation has been further explored by Dragi{\v{c}}evi{\'c} and Volberg in \cites{DragVol1,DragVol2,DragVol3}. This reasoning produces good $L^p\to L^p$ estimates for any operator that can be represented as a linear combination of products of two Riesz transforms. As such, the planar Beurling--Ahlfors transform serves as a prime object of study. The estimates are by duality: One recasts the action of the operator on two test functions in terms of derivatives of their heat extensions:
$$
\int_{\mathbb{R}^m}(R_kR_j\varphi)\psi=-2\iint_{\mathbb{R}^{m+1}_+}\partial_k\tilde{\varphi}\partial_j\tilde{\psi},
$$
where $\tilde{f}=\tilde{f}(x,t)$ is the heat extension of a function $f.$ This new integral is estimated using a Bellman function with appropriate size and concavity properties, together with a Green's function argument, which expresses the estimate in terms of the $L^p$ norms of the original test functions.

We are able to combine this approach with careful combinatorial considerations to obtain new estimates on $\norm{\mathcal{S}}_{p\to p}.$ Specifically, with no restriction on $n,$ we show that
$$
\norm{\mathcal{S}}_{L^p(\R^n;\Lambda)\to L^p(\R^n;\Lambda)}\le n(p^*-1).
$$
Although this result still falls far short from attaining the full Iwaniec--Martin conjecture, it improves significantly upon \eqref{e2}. In addition, we are hopeful that our approach has the potential to yield a qualitative reduction in the size of $C(n).$

The structure of the paper is as follows. In Section~\ref{s2}, we describe the basic objects under study, fix the notation to be used, and state the main results. In Section~\ref{s3}, contingent on an ``embedding-type'' theorem, we prove an upper estimate for the generalized Beurling--Ahlfors operator $\mathcal{S}$ in terms of derivatives of heat extensions of functions. The theorem, also proved in the section, relates the $L^p$ and $L^q$ norm of two test functions to integrals of derivatives of their heat extensions. The proof of this theorem, in turn, is based upon the existence of a certain Bellman function taking form valued variables, with delicate size and concavity properties. That existence is established in Section~\ref{s4} using Burkholder's famous estimates.
\begin{tks*}
We would like to express our sincere gratitude for the help and generosity of the organizers of the Linear Analysis and Probability Workshop at Texas A\&M University in the Summer 2006. This project began during  that workshop, when we were able to meet for a period of several weeks to talk about this problem. Special thanks are due to the organizers of the Thematic Program in Harmonic Analysis at the Fields Institute in Winter-Spring of 2008, as well as the staff of the Institute. It is during that semester that the paper took its final form. 
\end{tks*}
\section{Basic objects and main results}
\label{s2}
\subsection{Basic definitions}
\label{s2.1}
We now describe the principal objects under study.  We will be interested in differential forms on $\mathbb{R}^m$.  $\Lambda_k(\mathbb{R}^m)$ is the collection of $k$-forms in $\mathbb{R}^m$.  The dimension of this space is $\f{m!}{(m-k)!k!}$.  Let $\mathcal{I}_{m,k}$ denote the collection of ordered $k$ multi-indices $\overline{\imath}$, which are collections of $k$ distinct elements $i_1<\ldots<i_k$ of $\{1,\ldots,m\}$. There is a one-to-one correspondence between these multi-indices and subsets of $\{1,\ldots,m\}$ of order $k$. 

A basis for the $k$-forms is given as follows.  Let $\{e_1,\ldots, e_m\}$ denote the standard basis for $\mathbb{R}^m$.  For a multi-index $\overline{\imath}$, we set $e_{\overline{\imath}}:=e_{i_1}\wedge e_{i_2}\wedge\cdots\wedge e_{i_k}$.  Then $\Lambda_k$ is the complex vector space of $k$-forms on $\mathbb{R}^m$ and is given by
$$
\Lambda_k:=\textnormal{span}\{e_{\overline{\imath}}:\overline{\imath}\in\mathcal{I}_{m,k}\}.
$$
As a complex vector space $\Lambda_k$, inherits the usual Hermitian inner product structure.

Let $\Lambda:=\Lambda(\R^m)$ denote the graded algebra of differential forms in $\mathbb{R}^m$ constructed from the $\Lambda_k$.  The grading is with respect to the exterior product $\wedge$.  This space has an orthogonal decomposition 
$$
\Lambda:=\bigoplus_{k=0}^m\Lambda_k.
$$

We will also need the Hodge star operator $*:\Lambda\to\Lambda$.  The Hodge star is defined on basis elements and then extended via linearity to all of $\Lambda$.  Let $e_{\overline{\imath}}\in\Lambda_k$ and let $\overline{\imath}^c$ denote the complementary set of indices to $\overline{\imath}$.  Then $*e_{\overline{\imath}}=\sigma(\overline{\imath})e_{\overline{\imath}^c}$, where $\sigma(\overline{\imath})$ is the sign of the permutation which interchanges $\{1,2,\ldots, k\}$ with $\overline{\imath}$.  An alternative characterization of the Hodge star $*$ is given by the identity,
$$
\langle \alpha,\beta\rangle e_1\wedge\cdots\wedge e_n=\overline{\beta}\wedge*\alpha\quad\forall \beta\in \Lambda.
$$
One can easily see that $**\vert_{\Lambda_k}=(-1)^{k(m-k)}\textnormal{Id}$.

We will be interested in the Lebesgue classes of functions that take values in forms, $L^p(\R^m;\Lambda_k)$.  Let
$$
L^p(\R^m;\Lambda_k):=\left\{\sum_{\overline{\imath}\in\mathcal{I}_{m,k}}f_{\overline{\imath}}e_{\overline{\imath}}:f_{\overline{\imath}}\in L^p(\R^m)\right\};
$$
the norm on this space is
$$
\norm{f}_{L^p(\R^m;\Lambda_k)}^p:=\int_{\R^m}\left(\sum_{\overline{\imath}\in\mathcal{I}_{m,k}}\abs{f_{\overline{\imath}}(x)}^2\right)^{p/2}dx=\int_{\R^m}\norm{f(x)}_{\Lambda_k}^pdx.
$$

Consider functions in $C_c^\infty(\R^m;\Lambda_k)\cap L^2(\R^m;\Lambda_k)$.  The exterior derivative $\operatorname{d}$ defined via the rule
$$
\operatorname{d}f=\sum_{\overline{\imath}}\sum_{j=1}^m\partial_{j}f_{\overline{\imath}}e_{j}\wedge e_{\overline{\imath}}
$$
takes $k$-forms to $(k+1)$-forms.  Via the inner product on $L^2(\R^m;\Lambda_k)$
$$
\langle f,g\rangle_{L^2(\R^m;\Lambda_k)}:=\int_{\R^m}\langle f(x),g(x)\rangle_{\Lambda_k} dx=\int_{\R^m}\overline{f(x)}\wedge *g(x)dx
$$
we let $\delta$ denote the formal adjoint of $\operatorname{d}$.  Then the following statement simply becomes integration by parts
$$
\langle \operatorname{d}\phi,\psi\rangle=\langle \phi,\delta \psi\rangle
$$
which, in turn, yields $\delta = (-1)^{(m-k)k}*\operatorname{d}*$, and so $\delta$ takes $k$-forms to $(k-1)$-forms.

One then defines the generalized Beurling--Ahlfors operator as
$$
\mathcal{S}:=(\operatorname{d}\delta-\delta\operatorname{d})\Delta^{-1},
$$
and let $\mathcal{S}_k$ denote this operator restricted to $k$-forms.  Because of the orthogonal decomposition $\Lambda$ into the $\Lambda_k$, the structure of $\mathcal{S}$ is
$$
\mathcal{S}=
\left(
\begin{array}{ccccc}
\mathcal{S}_0 & 0 & 0 & 0 & 0\\
0 & \mathcal{S}_1 & 0 & 0 & 0\\
0 & 0 & \ddots &  0 & 0\\
0 & 0  & 0 & \mathcal{S}_{m-1} & 0\\
0 & 0  & 0 & 0 &\mathcal{S}_m
\end{array}
\right).
$$

The definition of $\mathcal{S}$ immediately shows that $*\mathcal{S}_k=-\mathcal{S}_{m-k}*$, and so the behavior of the Beurling--Ahlfors operator on $k$-forms is the same as on $(m-k)$-forms. 

We wish to view the generalized Beurling--Ahlfors operator as a Fourier multiplier.  Using the Fourier transform, one readily checks that
$$
\widehat{\operatorname{d}f}(\xi)=i\xi\wedge \hat{f}(\xi),
$$
where $\xi\in\R^m$ is viewed as an element in $\Lambda_1$.  Let $[\xi]$ denote the matrix representing the action of $\omega\to \xi\wedge\omega$ in $\Lambda$.  One sees that
$$
\widehat{\operatorname{d}f}(\xi)=[i\xi]\hat{f}(\xi).
$$

The symbol of the adjoint operator $\delta$ is also easily seen to $\widehat{\delta(f)}(\xi)=[i\xi]^t\hat{f}(\xi)$.  Therefore, as a Fourier multiplier, $\mathcal{S}_k$ is given by
$$
\widehat{\mathcal{S}_k}(f)(\xi)=\f{[\xi]^t[\xi]-[\xi][\xi]^t}{\abs{\xi}^2}\hat{f}(\xi):=M_\sharp(\xi)\hat{f}(\xi),
$$
where $M_\sharp$ is a $\f{m!}{(m-k)!k!}\times \f{m!}{(m-k)!k!}$ matrix.  In fact, $M_\sharp(x)$ can be viewed as the $k$-th exterior power of the Jacobian of inversion in the unit sphere in $\mathbb{R}^m$, i.e. $x\to \f{x}{\abs{x}^2}$.  The entries of the matrix are readily computed as
\begin{displaymath}
\widehat{(\mathcal{S}_k)}_{\overline{\imath},\overline{\jmath}}=
\left\{
\begin{array}{ccc}
-\sum_{p\in \overline{\imath}}\f{\xi_p^2}{\abs{\xi}^2}+\sum_{q\in\overline{\imath}^c}\f{\xi_q^2}{\abs{\xi}^2} & : & \overline{\imath}=\overline{\jmath}\\
-\f{2\xi_p\xi_q}{\abs{\xi}^2} & : & \overline{\imath}\setminus\overline{\jmath}=p\textnormal{ and }\overline{\jmath}\setminus\overline{\imath}=q\\
0 & : & \textnormal{otherwise}
\end{array}\right.
\end{displaymath}
where we order the basis by the lexicographic ordering on the multi-indices $\overline{\imath}$. Note that $\overline{\imath}\setminus\overline{\jmath}=p$ and $\overline{\jmath}\setminus\overline{\imath}=q$ means that the sets $\overline{\imath}$ and $\overline{\jmath}$ have the same cardinality and differ by exactly one element, which is $p$ or $q$, respectively. We include two examples for the convenience of the reader.  When $m=2$ and $k=1$, the matrix $\widehat{\mathcal{S}_1}$ is simply the usual Beurling operator.  When $m=4, k=2$ one can compute that
$$
\widehat{(\mathcal{S}_k)}_{\overline{\imath},\overline{\jmath}}=
\frac{1}{|\xi|^2}
\left(
\begin{array}{cccccc}
0&-2\xi_2\xi_3&-2\xi_2\xi_4&-2\xi_2\xi_3&-2\xi_1\xi_4&0\\
-2\xi_2\xi_3&0&-2\xi_3\xi_4&-2\xi_1\xi_2&0&-2\xi_1\xi_4\\
-2\xi_2\xi_4&-2\xi_3\xi_4&0&0&-2\xi_1\xi_2&-2\xi_1\xi_3\\
-2\xi_1\xi_3&-2\xi_1\xi_2&0&0&-2\xi_3\xi_4&-2\xi_2\xi_4\\
-2\xi_1\xi_4&0&-2\xi_1\xi_2&-2\xi_3\xi_4&0&-2\xi_2\xi_3\\
0&-2\xi_1\xi_4&-2\xi_1\xi_3&-2\xi_2\xi_4&-2\xi_2\xi_3&0
\end{array}
\right)+\frac{D}{|\xi|^2}
$$
where $D$ is the diagonal matrix 
$$
\frac{1}{|\xi|^2}
\left(
\begin{array}{cccccc}
A &0&0&0&0&0\\
0&B&0&0&0&0\\
0&0&C&0&0&0\\
0&0&0&-C&0&0\\
0&0&0&0&-B&0\\
0&0&0&0&0&-A
\end{array}
\right)
$$
with $A=-\xi_1^2-\xi_2^2+\xi_3^2+\xi_4^2$, $B=-\xi_1^2+\xi_2^2-\xi_3^2+\xi_4^2$ and $C=-\xi_1^2+\xi_2^2+\xi_3^2-\xi_4^2$.


The generalized Beurling--Ahlfors operator can also be built out of Riesz transforms.  The Riesz transforms are operators acting on $L^p(\R^m)$ given by
$$
R_l(f)(x):=\f{\Gamma(\f{m+1}{2})}{\pi^{\f{m+1}{2}}}\int_{\R^m}\f{x_l-y_l}{\abs{x-y}^{m+1}}f(y)dy,\quad l=1,\ldots, m.
$$
On the frequency side this operator takes the very compact form
$$
\widehat{R_l(f)}(\xi)=i\f{\xi_l}{\abs{\xi}}\hat{f}(\xi)\quad l=1,\ldots, m.
$$

The structure of the symbol of $\mathcal{S}_k$ then gives
\begin{displaymath}
\label{Sform}
\tag{S}
(\mathcal{S}_k)_{\overline{\imath},\overline{\jmath}}=
\left\{
\begin{array}{ccc}
\sum_{p\in \overline{\imath}}R_p^2-\sum_{q\in\overline{\imath}^c}R_q^2 & : & \overline{\imath}=\overline{\jmath}\\
2R_pR_q & : & \overline{\imath}\setminus\overline{\jmath}=p\textnormal{ and }\overline{\jmath}\setminus\overline{\imath}=q\\
0 & : & \textnormal{otherwise}.
\end{array}\right.
\end{displaymath}

Since $\mathcal{S}_k$ is a Fourier multiplier operator, it also has a description as a convolution-type singular integral operator.  In this form it is given by
\begin{equation}
\mathcal{S}_k(f)(x):=\left(1-\f{2k}{m}\right)f(x)-\f{\Gamma(\f{m+2}{2})}{\pi^{\f{m}{2}}}\int_{\R^m}\f{\Omega(x-y)}{\abs{x-y}^m}f(y)dy
\end{equation}
where $\Omega(x)=M_\sharp(x)+\left(\f{2k}{m}-1\right)\textnormal{Id}:\Lambda_k\to\Lambda_k$.  The matrix $\Omega$ has entries that are degree $2$ homogeneous harmonic polynomials of Riesz transforms.

The case of even dimensions is especially interesting, since the structure is very reminiscent of the classical Beurling operator.  When $m=2n$, the operator $\mathcal{S}_n$ has the form
$$
\mathcal{S}_n(f)(x):=-\f{n!}{\pi^n}\int_{\R^{2n}}\f{M_\sharp(x-y)}{\abs{x-y}^{2n}}f(y)dy.
$$

Let $\norm{\mathcal{S}}_{p\to p}$ denote the norm of the operator acting on $L^p(\R^m;\Lambda)$, and let $\norm{\mathcal{S}_k}_{p\to p}$ denote the norm its restriction to $k$-forms, i.e. the norm of the operator acting on $L^p(\R^m;\Lambda_k)$.
Because of the orthogonal decomposition of $\Lambda$, we have
$$
\norm{\mathcal{S}}_{p\to p}=\max_{0\leq k\leq n}\norm{\mathcal{S}_k}_{p\to p}.
$$
\subsection{Statement of main results}
\label{s2.2}
The main result of this paper is the following estimate on the norm of the Beurling--Ahlfors transform.  Namely we show the following
\begin{thm}
\label{MainResultGen}
Let $\mathcal{S}$ denote the Beurling--Ahlfors transform in $L^p(\R^{m};\Lambda)$.  Then
$$
\norm{\mathcal{S}}_{L^p(\R^{m};\Lambda)\to L^p(\R^{m};\Lambda)}\leq m(p^{*}-1).
$$
\end{thm}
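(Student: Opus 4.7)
The plan is to reduce bounding $\|\mathcal{S}\|_{p\to p}$ to a scalar-valued integral inequality via heat extensions and then invoke the Bellman-function based embedding theorem promised in Section~\ref{s3}. First, since $\|\mathcal{S}\|_{p\to p} = \max_k \|\mathcal{S}_k\|_{p\to p}$, and since the Hodge identity $*\mathcal{S}_k = -\mathcal{S}_{m-k}*$ gives $\|\mathcal{S}_k\|_{p\to p} = \|\mathcal{S}_{m-k}\|_{p\to p}$, it suffices to handle a single fixed $k$ with $0 \leq k \leq m/2$. The endpoints $k=0$ and $k=m$ reduce to $\pm\mathrm{Id}$ and are trivial. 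By duality, fix test forms $\varphi \in C_c^\infty(\R^m;\Lambda_k)$ and $\psi \in L^q(\R^m;\Lambda_k)$ with $1/p + 1/q = 1$, and bound $|\langle \mathcal{S}_k\varphi, \psi\rangle|$.

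Next I would expand the pairing using the Riesz representation~\eqref{Sform}. Diagonal entries contribute $\sum_p \varepsilon_p^{\bar\imath} R_p^2$ with $\varepsilon_p^{\bar\imath} = +1$ for $p \in \bar\imath$ and $-1$ otherwise; off-diagonal entries corresponding to pairs $\bar\imath,\bar\jmath$ that differ by exactly one element $\{p\} = \bar\imath \setminus \bar\jmath$, $\{q\} = \bar\jmath \setminus \bar\imath$ contribute $2R_pR_q$. Applying the heat-extension identity $\int(R_pR_q\varphi)\psi = -2\iint_{\R^{m+1}_+} \partial_p\tilde\varphi\,\partial_q\tilde\psi\,dx\,dt$ to each term converts the pairing into
\[
\langle \mathcal{S}_k\varphi, \psi\rangle = -2\iint_{\R^{m+1}_+} \mathcal{B}_k\bigl(\nabla\tilde\varphi(x,t),\, \nabla\tilde\psi(x,t)\bigr)\,dx\,dt,
\]
where $\mathcal{B}_k$ is an explicit bilinear form in the spatial partial derivatives of the components $\tilde\varphi_{\bar\imath}$ and $\tilde\psi_{\bar\imath}$.

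The remaining task is to bound this integral by $m(p^*-1)\|\varphi\|_p\|\psi\|_q$. I would apply the embedding-type theorem of Section~\ref{s3}, which is proved using a Bellman function on form-valued variables (built in Section~\ref{s4} from Burkholder's sharp martingale estimate). To produce the factor $m$, I would organize $\mathcal{B}_k$ by grouping each off-diagonal contribution $\partial_p\tilde\varphi_{\bar\jmath}\,\partial_q\tilde\psi_{\bar\imath}$ with its symmetric partner $\partial_q\tilde\varphi_{\bar\imath}\,\partial_p\tilde\psi_{\bar\jmath}$ coming from the pair $(\bar\jmath,\bar\imath)$, and using the combinatorial fact that each index $p\in\{1,\ldots,m\}$ lies in exactly $\binom{m-1}{k-1}$ multi-indices in $\mathcal{I}_{m,k}$. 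If set up correctly, this arranges $\mathcal{B}_k$ as a controlled combination of at most $m$ scalar Nazarov--Volberg type building blocks, each contributing one factor of $(p^*-1)$.

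The main obstacle will be the combinatorial step: showing that the diagonal signs $\varepsilon_p^{\bar\imath}$ conspire with the off-diagonal pairings in such a way that $\mathcal{B}_k$ satisfies the size and concavity hypotheses of the embedding theorem with a constant proportional to $m$ rather than to $\binom{m}{k}$ or some larger combinatorial quantity. A secondary delicacy is verifying that the Burkholder-derived Bellman function of Section~\ref{s4} retains its sharp $(p^*-1)$ behavior when the scalar variables are promoted to elements of the finite-dimensional Hilbert space $\Lambda_k$; the linearity of the heat extension in each component, together with the Hilbert structure of $\Lambda_k$, should allow the scalar concavity argument to pass through coordinatewise.
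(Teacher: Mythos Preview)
Your proposal is essentially correct and follows the paper's approach: reduce to each $\mathcal{S}_k$, pair against a test form, expand via the Riesz representation~\eqref{Sform}, convert each $R_pR_q$ to $\partial_p\tilde\varphi\,\partial_q\tilde\psi$ via the heat-extension identity, and then invoke the Bellman-function embedding theorem. Two points where your write-up diverges from what the paper actually does are worth flagging.

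First, the combinatorial step is simpler than you anticipate. The paper does \emph{not} arrange $\mathcal{B}_k$ into ``at most $m$ Nazarov--Volberg building blocks,'' and the signs $\varepsilon_p^{\bar\imath}$ play no role: one simply takes absolute values. After symmetrizing each off-diagonal $2R_pR_q$ as $R_pR_q+R_qR_p$ (so that the heat identity yields $\partial_p\tilde\varphi_{\bar\jmath}\,\partial_q\tilde\psi_{\bar\imath}+\partial_q\tilde\varphi_{\bar\jmath}\,\partial_p\tilde\psi_{\bar\imath}$), one sorts all resulting terms into the $m^2$ classes indexed by which $\partial_i$ hits $\tilde\varphi$ and which $\partial_j$ hits $\tilde\psi$. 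Within each fixed $(i,j)$-class, every multi-index appears at most once in each slot, so Cauchy--Schwarz in the Hilbert space $\Lambda_k$ gives the pointwise bound
\[
|\mathcal{B}_k|\le \sum_{i,j=1}^m\|\partial_i\tilde\varphi\|_{\Lambda_k}\|\partial_j\tilde\psi\|_{\Lambda_k}.
\]
The factor $m$ then comes not from a decomposition into $m$ pieces but from a single application of Corollary~\ref{MainCor}, which is just Theorem~\ref{MainEst} plus the $\ell^1$--$\ell^2$ comparison $\bigl(\sum_i a_i\bigr)\bigl(\sum_j b_j\bigr)\le m\bigl(\sum_i a_i^2\bigr)^{1/2}\bigl(\sum_j b_j^2\bigr)^{1/2}$.

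Second, the Hodge-star reduction to $k\le m/2$ is unnecessary: the argument above is uniform in $k$, so one just takes the maximum at the end. Your secondary concern about the Bellman function in the vector-valued setting is legitimate and is exactly what Section~\ref{s4} addresses, building $B$ on $\R\times\R\times E\times E$ for a general Hilbert space $E$ from Burkholder's subordination lemma.
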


The case of even dimensions is so interesting from the point of view of quasiregular mappings, as to warrant a separate statement. Namely, we trivially have the following
\begin{cor}
\label{MainResult}
Let $\mathcal{S}_{n}$ denote the Beurling--Ahlfors transform acting on $n$-forms in $\R^{2n}$.  Then
$$
\norm{\mathcal{S}_{n}}_{L^p(\R^{2n};\Lambda_n)\to L^p(\R^{2n};\Lambda_n)}\leq 2n(p^{*}-1).
$$
\end{cor}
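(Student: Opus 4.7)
The plan is to read off the corollary from Theorem~\ref{MainResultGen} by restriction to a single block of the grading. Recall from Section~\ref{s2.1} that, relative to the orthogonal decomposition $\Lambda=\bigoplus_{k=0}^{m}\Lambda_k$, the generalized Beurling--Ahlfors operator has the block-diagonal form displayed there, with blocks $\mathcal{S}_k$ acting on $L^p(\R^m;\Lambda_k)$. Because each $L^p(\R^m;\Lambda_k)$ is therefore an $\mathcal{S}$-invariant subspace of $L^p(\R^m;\Lambda)$, we obtain the componentwise inequality
$$
\norm{\mathcal{S}_k}_{L^p(\R^m;\Lambda_k)\to L^p(\R^m;\Lambda_k)} \leq \norm{\mathcal{S}}_{L^p(\R^m;\Lambda)\to L^p(\R^m;\Lambda)}
$$
for every $0 \leq k \leq m$; in fact, as noted at the end of Section~\ref{s2.2}, equality is achieved at the maximizing $k$, so no information is lost by passing to the envelope.

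Specializing to $m=2n$ and $k=n$ and then invoking Theorem~\ref{MainResultGen} directly yields
$$
\norm{\mathcal{S}_n}_{L^p(\R^{2n};\Lambda_n)\to L^p(\R^{2n};\Lambda_n)} \leq \norm{\mathcal{S}}_{L^p(\R^{2n};\Lambda)\to L^p(\R^{2n};\Lambda)} \leq 2n(p^*-1),
$$
which is precisely the claim. There is no real obstacle to overcome: all of the substantive work --- the duality reduction of the bilinear form $\La\mathcal{S}f,g\Ra$ to an integral of products of partial derivatives of heat extensions, the construction of the form-valued Bellman function of Section~\ref{s4} with the requisite size and concavity, and the combinatorial accounting across all form degrees that produces the dimensional factor --- is already carried out in the proof of Theorem~\ref{MainResultGen}. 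The corollary is singled out only because in dimension $m=2n$ the middle-degree component $\mathcal{S}_n$ is the natural higher-dimensional analogue of the planar Beurling--Ahlfors transform and is the piece that governs the Beltrami-type applications to quasiregular mappings, so recording the bound with the explicit constant $2n$ in place of the generic dimension $m$ makes its role in that theory transparent.
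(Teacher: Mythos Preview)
Your argument is correct and matches the paper's approach: the paper states that the corollary follows ``trivially'' from Theorem~\ref{MainResultGen}, and your proof simply makes explicit the one-line observation that the block-diagonal structure of $\mathcal{S}$ gives $\norm{\mathcal{S}_k}_{p\to p}\le\norm{\mathcal{S}}_{p\to p}$, after which one specializes to $m=2n$, $k=n$. The only minor slip is a cross-reference: the identity $\norm{\mathcal{S}}_{p\to p}=\max_k\norm{\mathcal{S}_k}_{p\to p}$ appears at the end of Section~\ref{s2.1}, not Section~\ref{s2.2}.
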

We prove Theorem \ref{MainResultGen} for $p\geq 2$, with the result for $p\leq 2$ following by duality. Using a specially selected Bellman function, we will demonstrate a certain embedding theorem which provides a bilinear estimate of  derivatives of  heat extensions of functions by means of their $L^p$ norms.  Key to this approach is the following result, proved in Section~\ref{s4}.  
\begin{thm}
\label{MainEst}
Let $\varphi,\psi\in C_{c}^\infty(\R^{m};E)$ with $E$ a Hilbert space and $p\geq 2$.  Suppose that $\f{1}{p}+\f{1}{q}=1$ then
\begin{eqnarray*}
 & 2\iint_{\R^{m+1}_+}\left(\sum_{i=1}^m\norm{\partial_i\tilde{\varphi}(x,t)}^2_E\right)^{1/2}\left(\sum_{i=1}^m\norm{\partial_i\tilde{\psi}(x,t)}^2_E\right)^{1/2}dxdt\leq & \\
 & (p-1)\norm{\varphi}_{L^p(\R^m;E)}\norm{\psi}_{L^q(\R^m;E)}. &
\end{eqnarray*}
\end{thm}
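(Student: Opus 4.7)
We follow the heat-extension Bellman-function scheme described in the introduction. The centerpiece is to produce a function $B\colon\Omega\to[0,\infty)$ on the convex domain
$$\Omega\ :=\ \bigl\{(X,Y,\zeta,\eta)\in[0,\infty)^{2}\times E\times E\ :\ \|\zeta\|_{E}^{p}\le X,\ \|\eta\|_{E}^{q}\le Y\bigr\}$$
with two competing features: (S) \emph{size}, $B(X,Y,\zeta,\eta)\le(p-1)X^{1/p}Y^{1/q}$ together with $B(0,0,0,0)=0$; and (C) \emph{coupled Hessian lower bound}, for every $m$-tuple of tangent vectors $V_{1},\dots,V_{m}\in\R^{2}\oplus E\oplus E$ with components $V_{i}=(V_{i}^{X},V_{i}^{Y},V_{i}^{\zeta},V_{i}^{\eta})$,
$$\sum_{i=1}^{m}\bigl\langle d^{2}B(X,Y,\zeta,\eta)\cdot V_{i},V_{i}\bigr\rangle\ \ge\ 2\Bigl(\sum_{i=1}^{m}\|V_{i}^{\zeta}\|_{E}^{2}\Bigr)^{\!1/2}\Bigl(\sum_{i=1}^{m}\|V_{i}^{\eta}\|_{E}^{2}\Bigr)^{\!1/2}.$$
Producing such a $B$ with the sharp constant $(p-1)$ is the business of Section~\ref{s4} and rests on a Hilbert-valued version of Burkholder's extremal function; in this plan we treat it as a black box.

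Granted $B$, the remainder is bookkeeping. Set $U(x,t):=\bigl(\widetilde{\|\varphi\|_{E}^{p}},\widetilde{\|\psi\|_{E}^{q}},\tilde\varphi,\tilde\psi\bigr)(x,t)$, with each component extended to $\R^{m+1}_{+}$ via the heat semigroup normalized so $\partial_{t}u=\Delta u$; the hypothesis $\varphi,\psi\in C_{c}^{\infty}$ keeps all data smooth and integrable. Define
$$b(t)\ :=\ \int_{\R^{m}}B\bigl(U(x,t)\bigr)\,dx.$$
Property (S) and H\"older give
$$b(0)\ \le\ (p-1)\int_{\R^{m}}\|\varphi\|_{E}\|\psi\|_{E}\,dx\ \le\ (p-1)\|\varphi\|_{L^{p}(\R^{m};E)}\|\psi\|_{L^{q}(\R^{m};E)},$$
while standard heat-kernel decay for compactly supported smooth data forces $U(\cdot,t)\to 0$ uniformly, hence $b(\infty)=0$. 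Differentiating under the integral, using the heat equation, and integrating by parts twice in $x$ produces the identity
$$-b'(t)\ =\ \int_{\R^{m}}\sum_{i=1}^{m}\bigl\langle d^{2}B(U)\cdot\partial_{i}U,\partial_{i}U\bigr\rangle\,dx.$$
Applying (C) with $V_{i}=\partial_{i}U$, so that $V_{i}^{\zeta}=\partial_{i}\tilde\varphi$ and $V_{i}^{\eta}=\partial_{i}\tilde\psi$, yields
$$-b'(t)\ \ge\ 2\int_{\R^{m}}\Bigl(\sum_{i=1}^{m}\|\partial_{i}\tilde\varphi\|_{E}^{2}\Bigr)^{\!1/2}\Bigl(\sum_{i=1}^{m}\|\partial_{i}\tilde\psi\|_{E}^{2}\Bigr)^{\!1/2}dx,$$
and integrating in $t\in(0,\infty)$ against the bound on $b(0)-b(\infty)$ closes the argument.

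The main difficulty lies exclusively in (C). A merely pointwise-in-$i$ concavity of the form $\langle d^{2}B\cdot V,V\rangle\ge 2\|V^{\zeta}\|_{E}\|V^{\eta}\|_{E}$ would, after summing in $i$, produce only $2\sum_{i}\|\partial_{i}\tilde\varphi\|_{E}\|\partial_{i}\tilde\psi\|_{E}$, which by Cauchy--Schwarz is \emph{smaller} than the $\ell^{2}$ product on the right-hand side of the theorem---exactly the wrong direction. What is needed is a genuinely coupled Hessian estimate whose right-hand side already sees the full $\ell^{2}$ norm of the spatial gradient; this is what a suitably vectorized Burkholder function is designed to deliver, and it is why Section~\ref{s4} is the heart of the paper. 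A secondary technical point is the justification of differentiation under the integral and of $b(\infty)=0$, which reduces to routine heat-kernel tail estimates for $C_{c}^{\infty}$ data.
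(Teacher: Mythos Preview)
Your overall scheme is right in spirit but the ``bookkeeping'' is not routine, and as written it fails at a crucial point: the claim $b(\infty)=0$ is false. Uniform pointwise decay $U(\cdot,t)\to 0$ does \emph{not} imply $\int_{\R^m}B(U(x,t))\,dx\to 0$, because the domain is unbounded. Indeed, by the size estimate and H\"older,
\[
\int_{\R^m}B(U(x,t))\,dx\le(p-1)\Bigl(\int_{\R^m}\widetilde{\|\varphi\|_E^p}\Bigr)^{1/p}\Bigl(\int_{\R^m}\widetilde{\|\psi\|_E^q}\Bigr)^{1/q}=(p-1)\|\varphi\|_p\|\psi\|_q
\]
for every $t$, since the heat semigroup preserves $L^1$ mass; and in fact the pointwise estimate $B(U)\approx(p-1)\bigl(\widetilde{\|\varphi\|^p}\bigr)^{1/p}\bigl(\widetilde{\|\psi\|^q}\bigr)^{1/q}\sim (4\pi t)^{-m/2}\|\varphi\|_p\|\psi\|_q$ on a ball of radius $\sim\sqrt t$ shows $b(t)$ typically stays bounded away from $0$. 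So the identity $b(0)-b(\infty)=\int_0^\infty(-b'(t))\,dt$ does not yield the stated bound. This is precisely why the paper does \emph{not} integrate $B(U)$ over $\R^m$: instead it evaluates $b(x,t)=B(U(x,t))$ at the single point $(0,R^2)$, bounds it via the size estimate by $(p-1)(4\pi R^2)^{-m/2}\|\varphi\|_p\|\psi\|_q$, and then uses a parabolic Green's function on a large cylinder $D(0,lR)\times(0,lR^2)$ to convert the point evaluation into a space--time integral against the heat operator of $b$. The factor $(4\pi R^2)^{-m/2}$ exactly cancels against the Green's function normalization in the limit $l,R\to\infty$.

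A second point: your property (C) is correct as stated, but your explanation of why the pointwise bound is ``the wrong direction'' misses the actual mechanism. The Bellman function constructed in Section~\ref{s4} has \emph{only} the pointwise Hessian bound $\langle(-d^2B)V,V\rangle\ge 2\|V^\zeta\|_E\|V^\eta\|_E$; there is no separate ``coupled'' or ``vectorized'' Burkholder function. What upgrades the pointwise bound to your (C) is a pure linear-algebra fact (Lemma~\ref{ellipse} in the paper, from \cite{DragVol1}): if a fixed self-adjoint matrix $A$ satisfies $\langle Av,v\rangle\ge 2\|v^\zeta\|\|v^\eta\|$ for \emph{all} $v$, then there exists a single $\tau>0$ (depending on $A$, hence on the base point) with $\langle Av,v\rangle\ge\tau\|v^\zeta\|^2+\tau^{-1}\|v^\eta\|^2$ for all $v$. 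Summing this over $i=1,\dots,m$ with $v=\partial_iU$ and then applying AM--GM gives exactly your (C). So the passage from the pointwise inequality to the $\ell^2$ product is not built into the Bellman function; it is this ellipse lemma, and it should be named explicitly rather than absorbed into the black box.
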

We will apply this theorem with $E=\Lambda$ or $\Lambda_k$.  An immediate, and important for us, corollary is the following
\begin{cor}
\label{MainCor}
Let $\varphi,\psi\in C_{c}^\infty(\R^{m};E)$ with $E$ a Hilbert space and $p\geq 2$.  Suppose that $\f{1}{p}+\f{1}{q}=1$ then
\begin{eqnarray*}
 & 2\sum_{i,j=1}^{m}\iint_{\R^{m+1}_+}\norm{\partial_i\tilde{\varphi}(x,t)}_E\norm{\partial_j\tilde{\psi}(x,t)}_Edxdt\leq &\\
 & m(p-1)\norm{\varphi}_{L^p(\R^{m};E)}\norm{\psi}_{L^q(\R^{m};E)}. &
\end{eqnarray*}
\end{cor}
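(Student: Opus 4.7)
The plan is to derive Corollary~\ref{MainCor} directly from Theorem~\ref{MainEst} via a pointwise Cauchy--Schwarz inequality. The key observation is that the double sum over $i,j$ factors as a product of two single sums, namely
$$
\sum_{i,j=1}^m \norm{\partial_i\tilde\varphi(x,t)}_E\,\norm{\partial_j\tilde\psi(x,t)}_E \;=\; \Bigl(\sum_{i=1}^m\norm{\partial_i\tilde\varphi(x,t)}_E\Bigr)\Bigl(\sum_{j=1}^m\norm{\partial_j\tilde\psi(x,t)}_E\Bigr).
$$
So the task reduces to comparing the $\ell^1$ norms of the vectors $(\norm{\partial_i\tilde\varphi}_E)_{i=1}^m$ and $(\norm{\partial_j\tilde\psi}_E)_{j=1}^m$ to their respective $\ell^2$ norms.

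First, I would apply the elementary Cauchy--Schwarz inequality in $\R^m$: for any nonnegative numbers $a_1,\dots,a_m$ one has $\sum_{i=1}^m a_i \le \sqrt{m}\,\bigl(\sum_{i=1}^m a_i^2\bigr)^{1/2}$. Using this for both factors above yields the pointwise bound
$$
\sum_{i,j=1}^m \norm{\partial_i\tilde\varphi(x,t)}_E\,\norm{\partial_j\tilde\psi(x,t)}_E \;\le\; m\,\Bigl(\sum_{i=1}^m\norm{\partial_i\tilde\varphi(x,t)}_E^2\Bigr)^{1/2}\Bigl(\sum_{i=1}^m\norm{\partial_i\tilde\psi(x,t)}_E^2\Bigr)^{1/2},
$$
valid for every $(x,t)\in\R^{m+1}_+$.

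Next, I would integrate this pointwise inequality over $\R^{m+1}_+$, multiply by $2$, and pull the constant $m$ outside. The resulting right-hand side is exactly $m$ times the left-hand side of Theorem~\ref{MainEst}. Applying that theorem gives the bound $m(p-1)\norm{\varphi}_{L^p(\R^m;E)}\norm{\psi}_{L^q(\R^m;E)}$, which is precisely the claim of Corollary~\ref{MainCor}. There is no real obstacle here: the corollary is a trivial consequence of Theorem~\ref{MainEst} combined with the scalar Cauchy--Schwarz inequality in $\R^m$, and the factor of $m$ appears naturally as the cost of passing from an $\ell^2$-type estimate on the gradient to the crude $\ell^1\otimes\ell^1$ sum needed later to control the Riesz transform products appearing in the representation \eqref{Sform} of $\mathcal{S}_k$.
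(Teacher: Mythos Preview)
Your argument is correct and matches the paper's own justification, which simply says the corollary follows by comparing $\ell^1$ and $\ell^2$ norms on $\R^m$. The factorization of the double sum and the two applications of Cauchy--Schwarz each contributing a factor $\sqrt{m}$ is exactly what is intended.
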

This can be seen by comparing $l^1$ and $l^2$ norms on $\R^m$.  Variants of these results can be found in \cites{DragTreilVol, DragVol1, DragVol2, DragVol3}.
\section{Estimates of the generalized Beurling--Ahlfors operator}
\label{s3}
In this section, we prove Theorem \ref{MainResultGen} given that Corollary \ref{MainCor} holds.  Given the block structure of $\mathcal{S}$ on $L^p(\R^{m};\Lambda)$, we will estimate each of the quantities $\norm{\mathcal{S}_k}_{p\to p}$ separately.  
\begin{prop}
\label{MainProp}
Let $\mathcal{S}_k$ denote the Beurling--Ahlfors transform in $L^p(\R^{m};\Lambda_k)$.  Then
$$
\norm{\mathcal{S}_k}_{L^p(\R^{m};\Lambda_k)\to L^p(\R^{m};\Lambda_k)}\leq m(p^{*}-1).
$$

\end{prop}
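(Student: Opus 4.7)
The plan is to reduce the proposition to an $\ell^2$ operator-norm estimate for an explicit $\{-1,0,1\}$-valued matrix, and then invoke Theorem~\ref{MainEst}. Fix test functions $\varphi,\psi\in C_c^\infty(\R^m;\Lambda_k)$. Using $R_pR_q=R_qR_p$, one can symmetrize the representation~(S) to
$$
(\mathcal{S}_k)_{\bar\imath,\bar\jmath}=\sum_{p,q=1}^m\tilde c^{\,\bar\imath,\bar\jmath}_{p,q}\,R_pR_q,
$$
where $\tilde c^{\,\bar\imath,\bar\imath}_{p,p}=+1$ for $p\in\bar\imath$ and $-1$ for $p\in\bar\imath^c$; when $\bar\imath\ne\bar\jmath$, $\tilde c^{\,\bar\imath,\bar\jmath}_{p,q}=1$ iff $\{p,q\}=\bar\imath\triangle\bar\jmath$; all other entries vanish. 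Applying the heat-extension identity $\int_{\R^m}(R_pR_q f)g=-2\iint_{\R^{m+1}_+}\partial_p\tilde f\,\partial_q\tilde g$ componentwise gives
$$
\langle\mathcal{S}_k\varphi,\psi\rangle=-2\iint_{\R^{m+1}_+}\langle C\,\partial\tilde\varphi(x,t),\partial\tilde\psi(x,t)\rangle\,dx\,dt,
$$
where $\partial\tilde\varphi:=(\partial_p\tilde\varphi_{\bar\jmath})_{p,\bar\jmath}$ is viewed as a vector in $\R^{m\binom{m}{k}}$, and $C$ is the square matrix (rows indexed by $(q,\bar\imath)$, columns by $(p,\bar\jmath)$) with entries $\tilde c^{\,\bar\imath,\bar\jmath}_{p,q}$.

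Next I would show $\|C\|_{\mathrm{op}}\le m$ by a row-sum count. The matrix $C$ is symmetric (swapping $(\bar\imath,p)\leftrightarrow(\bar\jmath,q)$ preserves $\tilde c$), so its spectral norm is bounded by its maximum $\ell^1$-row sum. Consider the row indexed by $(q,\bar\imath)$. If $q\in\bar\imath$, the only nonzero entries are the diagonal entry $+1$ together with a unit entry at $(p,(\bar\imath\setminus\{q\})\cup\{p\})$ for each of the $m-k$ choices of $p\in\bar\imath^c$, producing $1+(m-k)$ unit entries in total. If $q\notin\bar\imath$, the analogous count yields $1+k$ unit entries. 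Therefore $\|C\|_\infty=1+\max(k,m-k)\le m$ whenever $1\le k\le m-1$; the extreme cases $k=0$ and $k=m$ reduce to $\mathcal{S}_0=\mathrm{Id}$ and $\mathcal{S}_m=-\mathrm{Id}$, for which the desired bound is trivial.

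Combining the two steps,
$$
|\langle\mathcal{S}_k\varphi,\psi\rangle|\le 2m\iint_{\R^{m+1}_+}\|\partial\tilde\varphi(x,t)\|_2\,\|\partial\tilde\psi(x,t)\|_2\,dx\,dt.
$$
Since $\|\partial\tilde\varphi(x,t)\|_2^2=\sum_{p=1}^m\|\partial_p\tilde\varphi(x,t)\|_{\Lambda_k}^2$, Theorem~\ref{MainEst} applied with $E=\Lambda_k$ bounds the right-hand side by $m(p-1)\|\varphi\|_{L^p}\|\psi\|_{L^q}$. Taking the supremum over $\psi$ with $\|\psi\|_{L^q}\le 1$ yields $\|\mathcal{S}_k\|_{p\to p}\le m(p-1)=m(p^*-1)$ for $p\ge 2$; the range $1<p<2$ follows by duality, as $\mathcal{S}_k$ is self-adjoint.

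The main obstacle is the combinatorial bookkeeping underlying $\|C\|_{\mathrm{op}}\le m$: the off-diagonal coefficient $2$ in~(S) must be split via $R_pR_q=R_qR_p$ into two unit entries of $C$, so that the $\ell^1$-row sums come out to at most $m$. It is also essential to invoke Theorem~\ref{MainEst} rather than Corollary~\ref{MainCor}, since the former treats $(p,\bar\jmath)$ as a single index and, together with the operator-norm estimate, absorbs a factor of $2$ that would otherwise be lost.
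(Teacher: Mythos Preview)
Your argument is correct and takes a genuinely different route from the paper. The paper splits $\langle\mathcal{S}_k\varphi,\psi\rangle$ into diagonal and off-diagonal parts $I+II$, estimates each separately via Cauchy--Schwarz in $\Lambda_k$ to arrive at
\[
\abs{\langle\mathcal{S}_k\varphi,\psi\rangle}\le 2\sum_{i,j=1}^m\iint_{\R^{m+1}_+}\norm{\partial_i\tilde\varphi}_{\Lambda_k}\norm{\partial_j\tilde\psi}_{\Lambda_k}\,dx\,dt,
\]
and then invokes Corollary~\ref{MainCor}, whose factor $m$ comes from the $\ell^1$--$\ell^2$ comparison $\sum_i a_i\le\sqrt{m}\,(\sum_i a_i^2)^{1/2}$ applied to each of the two gradients. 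You instead package the whole bilinear form as $\langle C\,\partial\tilde\varphi,\partial\tilde\psi\rangle$ on $\R^{m\binom{m}{k}}$, bound the spectral norm of the symmetric $\{-1,0,1\}$-matrix $C$ by its maximal absolute row sum, and then apply Theorem~\ref{MainEst} directly with $E=\Lambda_k$. Both routes give the same constant $m(p^*-1)$, but yours is cleaner in that the combinatorics is isolated in the single estimate $\norm{C}_{\mathrm{op}}\le\norm{C}_\infty=1+\max(k,m-k)\le m$; this intermediate bound is in fact strictly smaller than $m$ for $1\le k\le m-1$, so your approach already hints at the $k$-dependent refinement the paper alludes to in Section~\ref{s5}. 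One cosmetic point: the off-diagonal entries of $(\mathcal{S}_k)_{\bar\imath,\bar\jmath}$ carry a sign coming from the orientation of $e_{\bar\imath}$ versus $e_{\bar\jmath}$, so the nonzero entries of $C$ are $\pm1$ rather than all $+1$; this does not affect either the symmetry of $C$ (which follows from the self-adjointness of $\mathcal{S}_k$) or the row-sum bound.
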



We will prove Proposition \ref{MainProp} in Section \ref{s3.2}, using duality. In matrix notation, we have
$$
\langle\mathcal{S}_k\varphi,\psi\rangle = \sum_{\overline{\imath},\overline{\jmath}}\int_{\R^{m}}\mathcal{S}_{\overline{\imath},\overline{\jmath}}(\varphi_{\overline{\jmath}})\psi_{\overline{\imath}}dx
$$
Since we want to use Corollary \ref{MainCor}, it becomes important to understand how the action of Riesz transforms interacts with the heat extension of a function.
\subsection{A Littlewood--Paley identity for heat extensions}
\label{s3.1}
The following proposition will be key for us.  It relates the action of double Riesz transforms on functions to the derivatives of their heat extensions.  Let $\varphi, \psi\in C_c^\infty(\R^{m};E)$.  Let $\tilde{f}$ denote the heat extension of $f$, which is given by the following formula
$$
\tilde{f}(y,t):=\f{1}{(4\pi t)^{m/2}}\int_{\R^{m}}f(x)\exp\left(-\f{\abs{x-y}^2}{4t}\right)dx,\quad(y,t)\in \R^{m+1}_+.
$$
\begin{lm}
\label{HeatExt}
Let $\varphi,\psi\in C_{c}^\infty(\R^{m};E)$.  Let $R_l$, $l=1,\ldots, m$, denote the Riesz transforms.  Then the integral
$$
\iint_{\R^{m+1}_+} \langle\partial_j\tilde{\varphi},\partial_k\tilde{\psi}\rangle_E dxdt
$$
converges absolutely and
$$
\int_{\R^{m}}\langle R_jR_k\varphi,\psi\rangle_E dx = -2\iint_{\R^{m+1}_+} \langle\partial_j\tilde{\varphi},\partial_k\tilde{\psi}\rangle_E dxdt
$$
\end{lm}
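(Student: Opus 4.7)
The natural strategy is to move both sides to the spatial Fourier side, where each reduces to an elementary scalar computation. First I would record that the heat extension of $f\in C_c^\infty(\R^m;E)$ has spatial Fourier transform $\widehat{\tilde f}(\xi,t)=e^{-t|\xi|^2}\hat f(\xi)$, so
\[
\widehat{\partial_j\tilde\varphi}(\xi,t)=i\xi_j\,e^{-t|\xi|^2}\hat\varphi(\xi),\qquad \widehat{\partial_k\tilde\psi}(\xi,t)=i\xi_k\,e^{-t|\xi|^2}\hat\psi(\xi).
\]
Applying Plancherel's identity on each horizontal slice, and using that $\xi_j,\xi_k$ are real so that no sign or conjugate appears when the scalars $i\xi_j, i\xi_k$ are pulled out of $\langle\cdot,\cdot\rangle_E$, one obtains
\[
\int_{\R^m}\langle\partial_j\tilde\varphi(x,t),\partial_k\tilde\psi(x,t)\rangle_E\,dx=\int_{\R^m}\xi_j\xi_k\,e^{-2t|\xi|^2}\langle\hat\varphi(\xi),\hat\psi(\xi)\rangle_E\,d\xi.
\]

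Integrating in $t\in(0,\infty)$ and exchanging orders (to be justified below), the elementary identity $\int_0^\infty e^{-2t|\xi|^2}\,dt=\f{1}{2|\xi|^2}$ yields
\[
\iint_{\R^{m+1}_+}\langle\partial_j\tilde\varphi,\partial_k\tilde\psi\rangle_E\,dx\,dt=\f{1}{2}\int_{\R^m}\f{\xi_j\xi_k}{|\xi|^2}\langle\hat\varphi,\hat\psi\rangle_E\,d\xi.
\]
On the other hand, since $\widehat{R_lf}(\xi)=i(\xi_l/|\xi|)\hat f(\xi)$, one has $\widehat{R_jR_k\varphi}(\xi)=-(\xi_j\xi_k/|\xi|^2)\hat\varphi(\xi)$, and a second application of Plancherel gives
\[
\int_{\R^m}\langle R_jR_k\varphi,\psi\rangle_E\,dx=-\int_{\R^m}\f{\xi_j\xi_k}{|\xi|^2}\langle\hat\varphi,\hat\psi\rangle_E\,d\xi,
\]
which is exactly $-2$ times the previous display; this is the claimed identity.

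The only genuine technical point is the absolute convergence of the $(x,t)$-integral, which also legitimizes the Fubini step above. I would split the $t$-range at $t=1$. On $[0,1]$, Cauchy--Schwarz in $x$ combined with the $L^2$-contractivity of the heat semigroup bounds the inner integral by $\|\partial_j\tilde\varphi(\cdot,t)\|_{L^2}\|\partial_k\tilde\psi(\cdot,t)\|_{L^2}\le\|\partial_j\varphi\|_{L^2}\|\partial_k\psi\|_{L^2}$, uniformly in $t$. On $[1,\infty)$, using $\varphi\in C_c^\infty(\R^m;E)$ and hence $\hat\varphi\in L^\infty$, one has
\[
\|\partial_j\tilde\varphi(\cdot,t)\|_{L^2}^2=\int_{\R^m}\xi_j^2\,e^{-2t|\xi|^2}\|\hat\varphi(\xi)\|_E^2\,d\xi\le\|\hat\varphi\|_\infty^2\int_{\R^m}\xi_j^2\,e^{-2t|\xi|^2}\,d\xi=O(t^{-m/2-1})
\]
by scaling, and likewise for $\psi$, so the inner integral is $O(t^{-m/2-1})$, which is integrable at infinity for every $m\ge 1$. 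There is no substantive obstacle: the identity is essentially trivial on the Fourier side, and the only thing to watch is this routine integrability check, plus the absence of any conjugation subtlety (guaranteed by the real-valuedness of $\xi_j,\xi_k$) when $E$ is taken to be a complex Hilbert space.
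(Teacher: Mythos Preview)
Your argument is correct and is precisely the approach the paper has in mind: the paper's proof simply invokes Parseval's identity together with the elementary computation $2\int_0^\infty e^{-2t|\xi|^2}\,dt=|\xi|^{-2}$, applied componentwise, which is exactly what you have carried out. Your treatment of absolute convergence (the $t\le1$/$t\ge1$ split) supplies detail the paper omits but is entirely in the same spirit.
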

\begin{proof}
The proof of this lemma is just a component-wise application of the scalar formula that first appeared in \cite{PetermichlVol} with $E=\R^1$. It is based on the fact that a function is the integral of its derivative and Parseval's identity.  The heat kernel comes into play because $2\int_0^{\infty}e^{-2t|\xi|^2}dt=|\xi|^{-2}$.

\end{proof}
\subsection{Estimating the Beurling--Ahlfors transform via heat extensions}
\label{s3.2}
\begin{proof}[Proof of Lemma \ref{MainProp} and Theorem \ref{MainResultGen}]
We now will use Lemma \ref{HeatExt} in conjunction with the structure of the operator $\mathcal{S}_k$.  We split the action of the operator $\mathcal{S}_k$ into the diagonal and off diagonal parts. Namely,
\begin{eqnarray*}
\langle\mathcal{S}_k\varphi,\psi\rangle  & =  & \sum_{\overline{\imath},\overline{\jmath}}\int_{\R^{m}}\mathcal{S}_{\overline{\imath},\overline{\jmath}}(\varphi_{\overline{\jmath}})\psi_{\overline{\imath}}dx\\
 & = & \sum_{\overline{\imath}}\int_{\R^{m}}\mathcal{S}_{\overline{\imath},\overline{\imath}}(\varphi_{\overline{\imath}})\psi_{\overline{\imath}}dx+\sum_{\overline{\imath}\neq\overline{\jmath}}\int_{\R^{m}}\mathcal{S}_{\overline{\imath},\overline{\jmath}}(\varphi_{\overline{\jmath}})\psi_{\overline{\imath}}dx\\
 & =: & I+II.
\end{eqnarray*}

The estimate of $I$ is rather straightforward.  Using Lemma \ref{HeatExt} and (\ref{Sform}) we find that
$$
I=2\sum_{\overline{\imath}}\iint_{\R^{m+1}_+}\left(\sum_{i\in\overline{\imath}^c}\langle\partial_i\tilde{\varphi}_{\overline{\imath}},\partial_i\tilde{\psi}_{\overline{\imath}}\rangle-\sum_{i\in\overline{\imath}}\langle\partial_i\tilde{\varphi}_{\overline{\imath}},\partial_i\tilde{\psi}_{\overline{\imath}}\rangle\right)dxdt
$$

One notes that $\overline{\imath}\cup\overline{\imath}^c=\{1,\ldots, m\}$.  An application of H\"older's inequality implies thus 
\begin{eqnarray*}
\abs{I} & \leq & 2\iint_{\R^{m+1}_+}\sum_{\overline{\imath}}\sum_{i=1}^{m}\abs{\partial_i\tilde{\varphi}_{\overline{\imath}}}\abs{\partial_i\tilde{\psi}_{\overline{\imath}}}dxdt\\
 & \leq & 2\sum_{i=1}^{m}\iint_{\R^{m+1}_+}\norm{\partial_i\tilde{\varphi}}_{\Lambda_k}\norm{\partial_i\tilde{\psi}}_{\Lambda_k}dxdt.
\end{eqnarray*}

We now turn to estimating $II$.  Again the key tools will be Lemma \ref{HeatExt} and (\ref{Sform}).  
Simple combinatorial arguments show that the matrix which represents the operator $\mathcal{S}_k$ has the following properties when excluding the diagonal:  Each row has $k(m-k)$ non-trivial entries and the matrix which represents $\mathcal{S}_k$ has $\f{m!}{(m-k)!k!}$ rows/columns.  Let $\overline{\imath}$ denote one of the $\f{m!}{(m-k)!k!}$ possible multi-indices that generate the rows/columns of $\mathcal{S}_k$.  In this row, the term $R_l$, $l=1,\ldots, m$, appears k times if $l\in\overline{\imath}^c$ and $m-k$ times if $l\in\overline{\imath}$.  Each non-trivial off-diagonal term is of the form $2R_jR_k$, which we will write as $R_jR_k+R_jR_k$.  Thus we can recognize $2R_jR_k(f)g$ as $\partial_j\tilde{f}\partial_k\tilde{g}+\partial_k\tilde{f}\partial_j\tilde{g}$, introducing a factor of 2. All this implies that the number of non-zero terms in $II$ is
$$
2\f{m!}{(m-k)!k!}(m-k)k=m(m-1)\cdot2\f{(m-2)!}{(m-k-1)!(k-1)!}.
$$


To estimate $II$, one expands the expression 
$$
\sum_{\overline{\imath}\neq\overline{\jmath}}\mathcal{S}_{\overline{\imath},\overline{\jmath}}(\varphi_{\overline{\jmath}})\psi_{\overline{\imath}}
$$
and uses Lemma \ref{HeatExt} to remove the Riesz transforms and replace them by appropriate derivatives of the heat extensions of the components of the vectors $\varphi$ and $\psi$.  Finally, one divides all these terms into $m(m-1)$ classes, with each class containing $2\f{(m-2)!}{(m-k-1)!(k-1)!}$ elements.  The $m(m-1)$ possible classes correspond to the possibility of $\partial_i$ landing on $\tilde{\varphi}$ and $\partial_j$ landing on $\tilde{\psi}$.  For each class we then apply H\"older's inequality to conclude that
$$
\abs{II}\leq 2\sum_{i\neq j}^{m}\iint_{\R^{m+1}_+}\norm{\partial_i\tilde{\phi}}_{\Lambda_k}\norm{\partial_j\tilde{\psi}}_{\Lambda_k}dxdt.
$$

This estimate, along with $I$ yields 
\begin{eqnarray*}
\abs{\langle \mathcal{S}_k\varphi,\psi\rangle} & \leq & \abs{I}+\abs{II}\\
& \leq & 2\sum_{i,j=1}^{m}\iint_{\R^{m+1}_+}\norm{\partial_i\tilde{\varphi}}_{\Lambda_k}\norm{\partial_j\tilde{\psi}}_{\Lambda_k}dxdt.
\end{eqnarray*}

We now apply Corollary \ref{MainCor} and get
\begin{eqnarray*}
\abs{\langle \mathcal{S}_k\varphi,\psi\rangle}
& \leq & m(p-1)\norm{\varphi}_{L^p(\R^{m};\Lambda_k)}\norm{\psi}_{L^q(\R^{m};\Lambda_k)}.
\end{eqnarray*}

The last inequality follows from Theorem \ref{MainEst}.  Density and duality give Proposition \ref{MainProp}.  Since this argument is independent of $k$, upon taking that maximum over $0\leq k\leq m$ we see the validity of Theorem \ref{MainResultGen}.  
\end{proof}






\section{The Bellman function proof of Theorem \ref{MainEst}}
\label{s4}

The main results in this paper have been contingent upon Theorem \ref{MainEst}, so we now turn to its proof.
%
%
The following result is the key ingredient to arrive at Theorem \ref{MainEst}.
\begin{thm}
\label{Bellman}
For any fixed $p\geq 2$, its conjugate exponent $q,$ and Hilbert space $E$ define the domain
$$
D:=\{(\Xi,\Gamma,\xi,\gamma)\in \R\times\R\times E\times E: \norm{\xi}_E^p<\Xi,\norm{\gamma}_E^q<\Gamma\}.
$$
Let $K$ be any compact subset of $D$ and $\epsilon>0$.  There exists a function 
$B:=B_{\epsilon,K}(\Xi,\Gamma,\xi,\gamma)$, infinitely differentiable in a small neighborhood of $K$, such that
\begin{itemize}
\item[(i)] $0\leq B\leq (1+\epsilon)(p-1)\Xi^{1/p}\Gamma^{1/q}$,
\item[(ii)] $-d^2B\geq 2\norm{d\xi}_E\norm{d\gamma}_E$.
\end{itemize}
\end{thm}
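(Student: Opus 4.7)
The plan is to adapt the Bellman function construction of Nazarov--Volberg and Dragi{\v{c}}evi{\'c}--Volberg to the present Hilbert-space--valued setting. I would proceed in three stages: produce an explicit continuous candidate $B_0$ on $D$ from Burkholder's extremal function, verify the pointwise size and concavity inequalities for $B_0$, and finally mollify to obtain a $C^\infty$ function in a neighborhood of $K$.

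For the candidate, I would start from Burkholder's extremal function $U_p(\xi,\gamma)$ for the $L^p$ martingale transform inequality in the Hilbert-space--valued setting. This function depends on $\xi,\gamma$ only through $\norm{\xi}_E$ and $\norm{\gamma}_E$ and admits a piecewise explicit formula whose key feature is a ``concavity along admissible directions'' that encodes the sharp constant $p-1$ for $p\geq 2$. I would then incorporate the slack variables $\Xi,\Gamma$ by setting
$$
B_0(\Xi,\Gamma,\xi,\gamma) \;=\; (p-1)\,\Xi^{1/p}\Gamma^{1/q} \;-\; \Phi(\Xi,\Gamma,\xi,\gamma),
$$
where $\Phi\geq 0$ is a correction built from the positive quantities $\Xi-\norm{\xi}_E^p$ and $\Gamma-\norm{\gamma}_E^q$ together with $U_p$, tuned so that the $(\xi,\gamma)$-concavity of $-\Phi$ produces the required mixed bound $2\norm{d\xi}_E\norm{d\gamma}_E$ once the auxiliary directions $(d\Xi,d\Gamma)$ are used optimally.

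Property~(i) then follows from $\Phi\geq 0$ (which yields $B_0\leq (p-1)\Xi^{1/p}\Gamma^{1/q}$) and from the sharpness of Burkholder's estimate (which gives $B_0\geq 0$). For~(ii), I would compute the quadratic form $-d^2B_0$ in the infinitesimal variables $(d\Xi,d\Gamma,d\xi,d\gamma)$ and split the contributions: the concavity of $(\Xi,\Gamma)\mapsto \Xi^{1/p}\Gamma^{1/q}$, combined with the cross-derivatives coupling $\Xi$ with $\xi$ and $\Gamma$ with $\gamma$, should dominate $\norm{d\xi}_E\norm{d\gamma}_E$ after minimizing over $(d\Xi,d\Gamma)$, while Burkholder's concavity handles the residual pure $(\xi,\gamma)$ piece. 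That $E$ is only a Hilbert space rather than a Euclidean space of fixed dimension is not an obstacle because $B_0$ depends on $\xi,\gamma$ only through their norms, which reduces the Hessian verification to an essentially two-dimensional calculation via the standard tangential/radial decomposition of vectors in $E$.

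Finally, $B_0$ is non-smooth where $\xi=0$, $\gamma=0$, or the slacks vanish, so I would convolve $B_0$ with a non-negative $C^\infty_c$ mollifier at scale $\delta$ on $\R\times\R\times E\times E$. For a compact $K\subset D$, choosing $\delta$ small compared to $\mathrm{dist}(K,\p D)$ keeps the convolution well defined in a neighborhood of $K$. Convolution with a non-negative kernel preserves the pointwise bilinear Hessian inequality~(ii), while the uniform continuity of $B_0$ on a neighborhood of $K$ allows the error in~(i) to be absorbed into the factor $(1+\epsilon)$ by taking $\delta=\delta(\epsilon,K)$ sufficiently small. The main obstacle is the explicit choice of $\Phi$ and the verification of~(ii): the quadratic-form computation must be arranged so that the optimization over the auxiliary $(d\Xi,d\Gamma)$ directions precisely balances the indefinite $(\xi,\gamma)$-Hessian of Burkholder's function and produces the sharp coefficient $p-1$ on the right-hand side.
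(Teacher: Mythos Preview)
Your route is genuinely different from the paper's, and the difference is worth flagging. The paper does \emph{not} write down an explicit Burkholder-type formula and check the Hessian. Instead it defines the Bellman function abstractly as an extremal quantity: for a point $(\Xi,\Gamma,\xi,\gamma)\in D$ it sets
\[
B(\Xi,\Gamma,\xi,\gamma)=\sup\Biggl\{\frac{1}{4|J|}\sum_{I\subset J}|I|\,\bigl\|\langle f\rangle_{I_+}-\langle f\rangle_{I_-}\bigr\|_E\,\bigl\|\langle g\rangle_{I_+}-\langle g\rangle_{I_-}\bigr\|_E\Biggr\},
\]
the supremum being over all $E$-valued $f,g$ on a dyadic interval $J$ with $\langle\norm{f}_E^p\rangle=\Xi$, $\langle\norm{g}_E^q\rangle=\Gamma$, $\langle f\rangle=\xi$, $\langle g\rangle=\gamma$. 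The midpoint-concavity inequality $B(a)-\tfrac12(B(a_+)+B(a_-))\geq\tfrac14\norm{\xi_+-\xi_-}_E\norm{\gamma_+-\gamma_-}_E$ then drops out automatically from the recursive dyadic structure, and the size bound $B\le(p-1)\Xi^{1/p}\Gamma^{1/q}$ is exactly the content of the Hilbert-space Burkholder subordination lemma (applied to Haar multipliers, then dualized). Only at the very end is $B$ mollified to get smoothness and the infinitesimal form of (ii). So Burkholder enters through the \emph{inequality}, not through the explicit function $U_p$.

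Your explicit-formula strategy is a legitimate alternative that appears elsewhere in the literature, and when it works it has the advantage of giving a closed expression one can differentiate directly. But as written your proposal has a real gap: you never specify $\Phi$, and you yourself identify ``the explicit choice of $\Phi$ and the verification of (ii)'' as the main obstacle. That is not a technicality --- it is the whole proof. Writing $B_0=(p-1)\Xi^{1/p}\Gamma^{1/q}-\Phi$ with $\Phi\ge0$ gives (i) for free, but then all of the concavity burden in (ii) falls on $\Phi$, and the concave part $(p-1)\Xi^{1/p}\Gamma^{1/q}$ contributes nothing to the mixed $\norm{d\xi}_E\norm{d\gamma}_E$ term since it does not see $\xi,\gamma$ at all. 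Your heuristic about ``minimizing over $(d\Xi,d\Gamma)$'' cannot manufacture $(\xi,\gamma)$-concavity from a function that is constant in $(\xi,\gamma)$. Until $\Phi$ is written down and its Hessian is actually computed, the argument is only a wish. The paper's supremum construction sidesteps precisely this difficulty: concavity is structural rather than computational, at the cost of having no formula for $B$.
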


In (ii), the inequality is in the sense of bilinear operators.  $-d^2B$ is a square matrix of size $2+2\dim{E}$ taking the argument $(d\Xi, d\Gamma, d\xi, d\gamma)$. Thus the operator on the right hand side is the one that returns $2\norm{d\xi}_E\norm{d\gamma}_E$.

We postpone the proof of Theorem \ref{Bellman}, but first use it to prove Theorem~\ref{MainEst}.
\begin{proof}[Proof of Theorem~\ref{MainEst}]
Let $\varphi,\psi\in C_c^\infty(\R^{m};E)$ fixed. Take $B$ from Theorem \ref{Bellman}, where we still have to choose the compact set $K$ and $\epsilon$.  We set
$$
b(x,t):=B(\widetilde{\norm{\varphi}_E^p}(x,t),\widetilde{\norm{\psi}_E^p}(x,t),\widetilde{\varphi}(x,t),\widetilde{\psi}(x,t)).
$$
It is important to note the order of the operations: $\widetilde{\norm{\varphi}_E^p}(x,t)$ means the heat extension of the function $\norm{\varphi}_E^p$. 
H\"older's inequality implies that the vector 
$$
v(x,t):=(\widetilde{\norm{\varphi}_E^p}(x,t),\widetilde{\norm{\psi}_E^p}(x,t),\widetilde{\varphi}(x,t),\widetilde{\psi}(x,t))\in D\quad\forall (x,t)\in\R^{m+1}_+.
$$
To choose $K$, we fix a compact $M$ of $\R^{m+1}_+$ and let $K$ be its  image under the map  $(x,t)\to v(x,t)$.  $K$ is compact as well because $\phi$ and $\psi$ belong to $C_c^{\infty}$. We choose $\epsilon>0$ as half the distance to the boundary of the domain $D$. In fact, we will be choosing a sequence of sets $M$ that exhaust the upper half space. The parameter $\epsilon$ will decrease as $M$ increases. 

For a fixed $l>0$ consider the following cylinder, $\Omega^1:=D(0,l)\times(0,l)$.  Let $\partial'\Omega^1=\partial D(0,l)\times (0,l)$.  Now consider the following Green's function $G^1$ adapted to $\Omega^1$:
$$
\left\{
\begin{array}{cccc}
\left(\partial_t+\Delta\right)G^1 & = & -\delta_{0,1} & \textnormal{in } \Omega^1,\\
G^1 & = & 0 & \textnormal{on } \partial'\Omega^1,\\
G^1 & = & 0 & \textnormal{when } t=l.
\end{array}
\right..
$$
with $\delta_{x,t}$ denoting the delta measure at the point $(x,t)$.  Let $k(x,t):=\f{1}{(4\pi t)^{m/2}}\exp\left(\f{-\abs{x}^2}{4t}\right)$ denote the heat kernel in $\R^{m+1}_+$.  It is important to keep in mind that $G^1(0,0)\to k(0,1)$ as $l\to\infty$. Also, one has $G(x,0)\ge a(1-||x||)$ for some positive $a$, and so the outward normal derivative of $G$ near the bottom of the cylinder is negative.

We will need to scale the cylinders in the course of the proof, so we also need a Green's function $G^R$ adapted to the cylinder $\Omega^R:=D(0,Rl)\times(0,lR^2)$, which satisfies
$$
\left\{
\begin{array}{cccc}
\left(\partial_t+\Delta\right)G^R & = & -\delta_{0,R^2} & \textnormal{in } \Omega^R,\\
G^R & = & 0 & \textnormal{on } \partial'\Omega^R,\\
G^R & = & 0 & \textnormal{when } t=lR^2.
\end{array}
\right..
$$
It is straightforward to see that we have $G^R(x,t)=\f{1}{R^m}G^1\bigl(\f{x}{R},\f{t}{R^2}\bigr)$.  

First, we estimate $b(0,R^2)$.  Using the size estimate of the function $B$ obtained from Theorem~\ref{Bellman}, we  have
$$
b(0,R^2) \leq (1+\epsilon)(p-1)(\widetilde{\norm{\varphi}_E^p}(0,R^2))^{1/p}(\widetilde{\norm{\psi}_E^q}(0,R^2))^{1/q},
$$
which gives, using the definition of the heat extension,
\begin{eqnarray*}
b(0, R^2) &\leq &\f{(1+\epsilon)(p-1)}{(4\pi R^2)^{m/2}}\left(\int_{\R^{m}}\norm{\varphi(x)}_E^pe^{-\f{\abs{x}^2}{4R^2}}\right)^{1/p}\left(\int_{\R^{m}}\norm{\psi(x)}_E^qe^{-\f{\abs{x}^2}{4R^2}}\right)^{1/q}\\
&\leq &\f{(1+\epsilon)(p-1)}{(4\pi R^2)^{m/2}}\left(\int_{\R^{m}}\norm{\varphi(x)}_E^p\right)^{1/p}\left(\int_{\R^{m}}\norm{\psi(x)}_E^q\right)^{1/q}.
\end{eqnarray*}

Now we apply Green's formula in the cylinder $\Omega_{R,\delta}:=\Omega^R\cap\{t>\delta\}$: 
\begin{eqnarray*}
b(0,R^2) & = & -\iint_{\Omega_{R,\delta}} b(x,t)\left(\partial_t+\Delta\right)G^R(x,t)dxdt\\
 & = & \iint_{\Omega_{R,\delta}}G^R(x,t)\left(\partial_t-\Delta\right)b(x,t)dxdt\\ 
 & + & \int_{D(0,R^2)}b(x,\delta)G^R(x,\delta)dx\\
 & + & \int_{\partial'\Omega^R\cap\{t>\delta\}}\left(G^R\f{\partial b}{\partial n_{outer}}-b\f{\partial G^R}{\partial n_{outer}}\right)dsdt\\
 & \geq & \iint_{\Omega^R}G^R(x,t)\left(\partial_t-\Delta\right)b(x,t)dxdt.\\  
 \end{eqnarray*}
The last inequality follows since $b$ and $G^R$ are non-negative, $G^R=0$ on the lateral boundary, and $G^R$ has a negative outer normal derivative near $t=0$. 
Thus, we have the following inequality
\begin{eqnarray*}
&&\iint_{\Omega_{R,\delta}}G_{\Omega_R}(x,t)\left(\partial_t-\Delta\right)b(x,t)dxdt\\ &\leq & 
 \f{(1+\epsilon)(p-1)}{(4\pi R^2)^{m/2}}\left(\int_{\R^{m}}\norm{\varphi(x)}_E^p\right)^{1/p}\left(\int_{\R^{m}}\norm{\psi(x)}_E^q\right)^{1/q}  . 
\end{eqnarray*}

Fix $R$ and $\delta>0$ and choose the compact set $M=\{(x,t):x\in\textnormal{clos}(D(0,lR)),\delta\leq t\leq lR^2\}$.  The function $v(x,t)$ maps  compact sets to compact sets, so $K:=v(M)$ is a compact set inside the domain $D$, as defined in Theorem \ref{Bellman}.  Finally, choose $B=B_{\epsilon, p, K}$ from Theorem~\ref{Bellman}.  We now apply the heat operator to $b(x,t)=B(v(x,t))$, which furnishes the remainder of the argument.
\begin{lm}
\label{heatcomp}
Let
$$
v(x,t):=(\widetilde{\norm{\varphi}_E^p}(x,t),\widetilde{\norm{\psi}_E^q}(x,t),\widetilde{\varphi}(x,t),\widetilde{\psi}(x,t))
$$
and $b(x,t)=B(v(x,t))$.  Let $\mathbb{E}=\R\times\R\times E\times E$.  Then, for all $(x,t)\in M$, we have
$$
\left(\partial_t-\Delta\right)b(x,t)=\sum_{i=1}^m\langle\left(-d^2B\right)\partial_i v,\partial_i v\rangle_{\mathbb{E}}.
$$
\end{lm}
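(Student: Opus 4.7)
The plan is a direct chain-rule computation, with the crucial simplification coming from the fact that every component of $v$ is itself a heat extension and therefore satisfies the (backward) heat equation $(\partial_t - \Delta)v_j = 0$. So when one applies $\partial_t - \Delta$ to the composition $b = B \circ v$, all first-order terms in $dB$ cancel, and one is left only with the quadratic-in-$dv$ Hessian terms.

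First I would compute $\partial_t b$ and each $\partial_i b$ by the chain rule, treating $B$ as a smooth function on the finite-dimensional space $\mathbb{E} = \R \times \R \times E \times E$ (we have assumed $K \subset D$ is compact and $B$ is $C^\infty$ on a neighborhood of $K$, so these operations are justified pointwise on $M$). This gives $\partial_t b = \langle dB, \partial_t v \rangle_{\mathbb{E}}$ and $\partial_i b = \langle dB, \partial_i v \rangle_{\mathbb{E}}$. Differentiating again in $x_i$ produces the standard two-term formula
\[
\partial_i^2 b \;=\; \langle d^2 B \cdot \partial_i v,\, \partial_i v\rangle_{\mathbb{E}} \;+\; \langle dB,\, \partial_i^2 v\rangle_{\mathbb{E}}.
\]
Summing over $i$ and subtracting from $\partial_t b$, I collect the $dB$ terms and the $d^2B$ terms separately:
\[
(\partial_t - \Delta)b \;=\; \langle dB,\, (\partial_t - \Delta)v\rangle_{\mathbb{E}} \;-\; \sum_{i=1}^m \langle d^2 B \cdot \partial_i v,\, \partial_i v\rangle_{\mathbb{E}}.
\]

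Next I would observe that each of the four components of $v$ is the heat extension of a function in $C_c^\infty(\R^m; \cdot)$, so each satisfies $(\partial_t - \Delta)v_j = 0$ on $\R^{m+1}_+$ (for the $E$-valued entries this is applied componentwise in any orthonormal basis of $E$, which is legitimate since $\varphi,\psi$ are compactly supported and smooth). Consequently $(\partial_t - \Delta)v = 0$ as an element of $\mathbb{E}$, so the first term above drops out, yielding exactly
\[
(\partial_t - \Delta)b(x,t) \;=\; \sum_{i=1}^m \langle (-d^2 B)\,\partial_i v,\, \partial_i v\rangle_{\mathbb{E}},
\]
which is the desired identity.

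The only real matter of care is justifying the termwise differentiation when $E$ is infinite-dimensional (e.g., Fréchet versus Gâteaux differentiation of $B$, and interchanging $\partial_i$ with the heat-semigroup integral defining $\tilde\varphi$). Since $\varphi, \psi \in C_c^\infty(\R^m; E)$, all heat extensions and their spatial/time derivatives are $C^\infty$ and obtained by differentiating under the integral sign against the Gaussian kernel, so both $v$ and its partials are smooth with values in $\mathbb{E}$; combined with the smoothness of $B$ on a neighborhood of the compact $K = v(M)$, the chain rule applies in the usual form. This is the only non-routine point, and it is mild.
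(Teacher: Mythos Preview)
Your proof is correct and follows essentially the same route as the paper: a chain-rule computation for $\partial_t b$ and $\partial_i^2 b$, followed by the observation that $(\partial_t-\Delta)v=0$ since each component of $v$ is a heat extension, which kills the first-order $dB$ terms. The paper's argument is identical in substance (and slightly terser, omitting your remarks on justifying differentiation in the infinite-dimensional case).
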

\begin{proof}
This is a straightforward computation.  By the chain rule we have,
\begin{eqnarray*}
\partial_t b(x,t) & = & \langle\nabla B(v(x,t)),\partial_t v(x,t)\rangle_{\mathbb{E}}\\
\partial_i^2 b(x,t) & = & \langle\nabla B(v(x,t))\partial_i^2 v(x,t)\rangle_{\mathbb{E}}+\langle d^2B(v(x,t))\partial_i v(x,t),\partial_i v(x,t)\rangle_{\mathbb{E}}.
\end{eqnarray*}
Since the components of $v(x,t)$ are given by the heat extensions we have
\begin{eqnarray*}
\left(\partial_t-\Delta\right)b(x,t) & = & \langle\nabla B(v(x,t)),\left(\partial_t-\Delta\right)v(x,t)\rangle_{\mathbb{E}}+\sum_{i=1}^m\langle\left(-d^2B\right)\partial_i v,\partial_i v\rangle_{\mathbb{E}}\\
 & = & \sum_{i=1}^m\langle\left(-d^2B\right)\partial_i v,\partial_i v\rangle_{\mathbb{E}}.
\end{eqnarray*}
\end{proof}
We also will need the following lemma from \cite{DragVol1} that will allow us to get an $\ell^2$ sum, as opposed to an $\ell^1$ sum.
\begin{lm}
\label{ellipse}
Let $m,n,k\in\N$.  Denote $d=m+n+k$.  For $v\in\R^d$ write $v=v_m\oplus v_n\oplus v_k$, where $v_i\in\R^i$ and $i=m,n,k$.  Let $R=\norm{v_m}$ and $r=\norm{v_n}$.  Suppose a self-adjoint $d\times d$ matrix $A$ satisfies
$$
\langle Av,v\rangle\geq 2Rr,
$$
for all $v\in\R^d$.  Then there exists a $\tau>0$ satisfying
$$
\langle Av,v\rangle\geq\tau R^2+\f{1}{\tau}r^2,
$$
for all $v\in\R^d$.
\end{lm}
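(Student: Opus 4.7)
The plan is to reduce to the case $k=0$ by a Schur-complement argument, then produce the required $\tau$ as the maximizer of the minimum eigenvalue of $A-\tau P_m-\tau^{-1}P_n$, and conclude via a Danskin-type envelope computation.

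First, setting $v_m=0$ (or $v_n=0$) in the hypothesis yields $\langle Av,v\rangle\ge 0$, so $A$ is positive semi-definite. To eliminate the $\R^k$-block, take the Schur complement: the form $\tilde A$ on $\R^{m+n}$ defined by
\[
\langle\tilde A(v_m\oplus v_n),v_m\oplus v_n\rangle:=\inf_{v_k\in\R^k}\langle A(v_m\oplus v_n\oplus v_k),v_m\oplus v_n\oplus v_k\rangle
\]
is itself PSD, and because the right-hand side of the hypothesis does not involve $v_k$, $\tilde A$ satisfies the same estimate $\langle\tilde A u,u\rangle\ge 2\norm{P_m u}\norm{P_n u}$. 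Any $\tau$ that works for $\tilde A$ automatically works for $A$ (the infimum in $v_k$ only decreases $\langle Av,v\rangle$, while the target lower bound sees only $v_m,v_n$), so I may assume $k=0$.

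For $k=0$, let $P_m,P_n$ denote the orthogonal projections and define $\mu(\tau):=\lambda_{\min}(A-\tau P_m-\tau^{-1}P_n)$ for $\tau>0$. For each unit vector $u$, the function $\tau\mapsto F(\tau,u):=\langle Au,u\rangle-\tau\norm{P_m u}^2-\tau^{-1}\norm{P_n u}^2$ is concave on $(0,\infty)$ (the term $-\tau^{-1}\norm{P_n u}^2$ is concave, the $-\tau\norm{P_m u}^2$ term is linear), so $\mu$, as a minimum of concave functions, is concave. Because $\mu(\tau)\to-\infty$ as $\tau\to 0^+$ and as $\tau\to\infty$ (the penalties dominate on the $P_n$- and $P_m$-images respectively), $\mu$ attains its maximum at some $\tau^*>0$.

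The crux is $\mu(\tau^*)\ge 0$. Let $\phi(u):=\partial_\tau F(\tau^*,u)=(\tau^*)^{-2}\norm{P_n u}^2-\norm{P_m u}^2$. Danskin's theorem for the superdifferential of the concave min-function $\mu$ at $\tau^*$ gives $\min\phi\le 0\le\max\phi$, with extrema taken over unit vectors in the minimum eigenspace of $A-\tau^*P_m-(\tau^*)^{-1}P_n$. If this eigenspace is one-dimensional, $\phi$ is even and hence constant on its two-point unit sphere, forcing $\phi\equiv 0$ there; if its dimension is at least two, the continuous $\phi$ on a connected sphere takes both a non-positive and a non-negative value, so vanishes at some unit $v^*$ by the intermediate value theorem. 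At any such zero $v^*$ one has $(\tau^*)^2\norm{P_m v^*}^2=\norm{P_n v^*}^2$ with both norms positive, hence $\tau^*\norm{P_m v^*}^2+(\tau^*)^{-1}\norm{P_n v^*}^2=2\norm{P_m v^*}\norm{P_n v^*}$, and the hypothesis at $v^*$ yields
\[
\mu(\tau^*)=F(\tau^*,v^*)=\langle Av^*,v^*\rangle-2\norm{P_m v^*}\norm{P_n v^*}\ge 0,
\]
as required. The main obstacle is the envelope step---justifying that the superdifferential of $\mu$ at $\tau^*$ is the closed interval $[\min\phi,\max\phi]$ over the minimum eigenspace, which is standard for minima of smooth concave families but requires care when the minimum eigenvalue is not simple---together with the small dichotomy (one- vs.\ higher-dimensional eigenspace) needed to locate a zero of $\phi$.
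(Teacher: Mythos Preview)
The paper does not prove this lemma; it is quoted verbatim from \cite{DragVol1}. So there is no in-paper proof to compare against, and I assess your argument on its own merits.

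Your proof is correct. A few small remarks:

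\emph{(i)} Positive semidefiniteness of $A$ follows immediately from the hypothesis, since $2Rr\ge 0$ for every $v$; the ``set $v_m=0$'' maneuver is unnecessary (and by itself would only test a subspace).

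\emph{(ii)} The Schur-complement reduction is valid: $A\ge 0$ guarantees that the infimum over $v_k$ is finite and equals a genuine quadratic form $\tilde A\ge 0$ on $\R^{m+n}$ (the generalized Schur complement, using $\operatorname{Ran}A_{12}^T\subset\operatorname{Ran}A_{22}$ for PSD block matrices), and $\tilde A$ inherits the hypothesis because the right-hand side $2Rr$ does not involve $v_k$.

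\emph{(iii)} The Danskin step is standard in exactly the form you need: $F$ is jointly $C^1$ on $(0,\infty)\times S^{m+n-1}$ with the sphere compact, so the one-sided derivatives of the concave function $\mu$ at $\tau^*$ are $\min\phi$ and $\max\phi$ over the minimizing set, and at a maximizer these bracket $0$. Your dimension dichotomy for the eigenspace then locates a unit $v^*$ with $\phi(v^*)=0$.

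\emph{(iv)} One point you assert without justification: at such $v^*$ both $\|P_mv^*\|$ and $\|P_nv^*\|$ are strictly positive. This holds because, with $k=0$, $\|P_mv^*\|^2+\|P_nv^*\|^2=1$, and if one of them vanished then $\phi(v^*)$ would equal $(\tau^*)^{-2}$ or $-1$, not zero.

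With these points filled in, the argument is complete. Your route---maximize $\mu(\tau)=\lambda_{\min}(A-\tau P_m-\tau^{-1}P_n)$ and use an envelope theorem to find a witness where the AM--GM inequality is tight---is a clean variational reformulation; the treatment in \cite{DragVol1} arrives at the same object $\mu(\tau)$ but argues somewhat more directly, without explicitly invoking Danskin. Either way the essential content is the same: at the optimal $\tau^*$ there is a minimizing direction on which the penalty $\tau R^2+\tau^{-1}r^2$ collapses to $2Rr$, so the hypothesis closes the loop.
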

By Theorem~\ref{Bellman}, we have
$$
-d^2 B(\Xi,\Gamma,\xi,\gamma)\geq 2\norm{d\xi}\norm{d\gamma}.
$$
Lemma \ref{heatcomp} and Lemma \ref{ellipse} then imply that
$$
\left(\partial_t-\Delta\right)b(x,t)=\sum_{i=1}^m\langle\left(-d^2B\right)\partial_i v,\partial_i v\rangle_{\mathbb{E}}\geq\sum_{i=1}^m\left(\tau\norm{\partial_i\tilde{\varphi}}_E^2+\f{1}{\tau}\norm{\partial_i\tilde{\psi}}_E^2\right).
$$
for some $\tau>0$. Consequently, we have
$$
\left(\partial_t-\Delta\right)b(x,t)\geq 2\left(\sum_{i=1}^m\norm{\partial_i\tilde{\varphi}}_E^2\right)^{1/2}\left(\sum_{i=1}^m\norm{\partial_i\tilde{\psi}}_E^2\right)^{1/2}.
$$
Thus, using the scaling properties of the Green's functions to different-sized cylinders, we are left with the following estimate:
\begin{eqnarray*}
&& 2\iint_{\Omega_{R,\delta}}G^1\Bigl(\f{x}{R},\f{t}{R^2}\Bigr)\left(\sum_{i=1}^m\norm{\partial_i\tilde{\varphi}}_E^2\right)^{1/2}\left(\sum_{i=1}^m\norm{\partial_i\tilde{\psi}}_E^2\right)^{1/2}dxdt\\ & \leq &  \f{(1+\epsilon)(p-1)}{(4\pi)^{m/2}}\left(\int_{\R^{m}}\norm{\varphi(x)}_E^p\right)^{1/p}\left(\int_{\R^{m}}\norm{\psi(x)}_E^q\right)^{1/q}  . 
\end{eqnarray*}
Recall that $M=\{(x,t):x\in\textnormal{clos}(D(0,lR)),\delta\leq t\leq lR^2\}$.  Fix any compact subset $M_0$ of $\R^{m+1}_+$ and choose $R>0$ and $\delta>0$ such that $M_0\subset M$.  Restrict the integration in the above inequality to the set $M_0$ and let $R\to\infty$.  Then $G^1\bigl(\f{x}{R},\f{t}{R^2}\bigr)\to G^1(0,0)$.  So we are left with
\begin{eqnarray*}
&& 2G^1(0,0)\iint_{M_0}\left(\sum_{i=1}^m\norm{\partial_i\tilde{\varphi}}_E^2\right)^{1/2}\left(\sum_{i=1}^m\norm{\partial_i\tilde{\psi}}_E^2\right)^{1/2}dxdt\\ & \leq & 
 \f{(1+\epsilon)(p-1)}{(4\pi)^{m/2}}\left(\int_{\R^{m}}\norm{\varphi(x)}_E^pdx\right)^{1/p}\left(\int_{\R^{m}}\norm{\psi(x)}_E^qdx\right)^{1/q} . 
\end{eqnarray*}
Next, we let $\Omega^1=D(0,l)\times (0,l)$ fill out all of $\R^{m+1}_+$ by letting $l\to\infty$.  But as $l\to\infty$ we have that $G_{\Omega}(0,0)\to k(0,1)=\f{1}{(4\pi)^{m/2}}$ where $k(x,t)$ is the heat kernel for $\R^m$.  Since $M_0$ and $\epsilon>0$ were arbitrary, we are left with the following
$$
2\iint_{\R^{m+1}_+}\left(\sum_{i=1}^m\norm{\partial_i\tilde{\varphi}}_E^2\right)^{1/2}\left(\sum_{i=1}^m\norm{\partial_i\tilde{\psi}}_E^2\right)^{1/2}dxdt \leq (p-1)\norm{\varphi}_{L^p(\R^m;E)}\norm{\psi}_{L^q(\R^m;E)}.
$$
\end{proof}
\subsection{Construction of the Bellman function: proof of Theorem \ref{Bellman}}
In this section we construct a Bellman function from Theorem \ref{Bellman}.  The construction  is dependent upon some results of Burkholder on $L^p$ estimates for martingales.  

Let $\mathcal{D}$ denote the usual dyadic grid in $\R$, and let $\{h_I\}_{I\in\mathcal{D}}$ denote the Haar basis adapted to the dyadic grid.  For continuous compactly supported functions we have a Haar decomposition as
$$
f=\sum_{I\in\mathcal{D}}\langle f,h_I\rangle h_I.
$$
We will also need the average of a function on a dyadic interval $I$, $\langle f\rangle_I:=\f{1}{\abs{I}}\int_I f(x)dx$.

Let $\{\sigma_I\}_{I\in\mathcal{D}}$ denote a collection of complex numbers with modulus less than or equal to one.  Define the corresponding \textit{Haar multiplier} by
$$
T_\sigma(f):=\sum_{I\in\mathcal{D}}\sigma_I\langle f,h_I\rangle h_I.
$$
In the scalar case, the estimate $\sup_\sigma\norm{T_\sigma}_{p\to p}\leq p^*-1$ can be deduced from the following lemma of Burkholder \cite{Burk}.  We need a vector-valued version of the lemma and so in the corresponding Haar multiplier $T_\sigma$ we will take $\{\sigma_I\}$ as a collection of unitary operators. 
\begin{lm}
Let $E$ be a separable Hilbert space.  Let $(X_n,F_n,P)$ and $(Y_n,F_n,P)$ be $E$-valued martingales on the measure space $(\Omega,P)$.  If for almost every $\omega$ we have
$$
\norm{X_0(\omega)}_E\leq\norm{Y_0(\omega)}_E,\quad\norm{X_n(\omega)-X_{n-1}(\omega)}_E\leq\norm{Y_n(\omega)-Y_{n-1}(\omega)}_E\quad\forall n.
$$
Then
$$
\norm{X_n}_{L^p((\Omega,P);E)}\leq(p^*-1)\norm{Y_n}_{L^p((\Omega,P);E)}
$$
\end{lm}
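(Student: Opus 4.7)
My plan is to follow Burkholder's original Bellman-function method, promoting his scalar function to the Hilbert-space setting by building it out of quantities that depend on $(x,y)$ only through the two norms $\norm{x}_E,\norm{y}_E$. The whole argument will rest on a single auxiliary function $U\colon E\times E\to\R$ with three carefully chosen properties, together with the martingale property.

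\textbf{Step 1 (The Burkholder function).} I would set
$$U(x,y):=\alpha_p\bigl(\norm{y}_E+\norm{x}_E\bigr)^{p-1}\bigl(\norm{y}_E-(p^*-1)\norm{x}_E\bigr),\qquad \alpha_p:=p\bigl(1-1/p^*\bigr)^{p-1}.$$
Because $U$ is a function of the two real variables $s=\norm{x}_E,\;t=\norm{y}_E$ alone, it is well defined on any Hilbert space.

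\textbf{Step 2 (Pointwise inequalities for $U$).} The first task is to check two elementary inequalities, both reducing to one-variable calculus in $(s,t)$:
\begin{enumerate}
\item[(a)] the majorization $\norm{x}_E^p-(p^*-1)^p\norm{y}_E^p\le U(x,y)$ for every $(x,y)$;
\item[(b)] the sign condition $U(x,y)\le 0$ whenever $\norm{x}_E\le\norm{y}_E$.
\end{enumerate}
These are the standard scalar facts underpinning the Burkholder inequality and transfer verbatim from the real-valued setting.

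\textbf{Step 3 (Concavity along subordinate increments).} The heart of the proof is the pointwise inequality
$$U(x+h,\,y+k)\le U(x,y)+\La A(x,y),h\Ra_E+\La B(x,y),k\Ra_E\qquad\text{whenever }\norm{h}_E\le\norm{k}_E,$$
with $A(x,y),B(x,y)\in E$ denoting the Fr\'echet derivatives of $U$ in its first and second argument respectively. Because $U$ depends on $(x,y)$ only through their norms, I may project $x,y,h,k$ onto the (at most four-dimensional) subspace they span without changing any of the quantities involved; the inequality is then reduced to a finite-dimensional calculation that is precisely Burkholder's planar estimate. Separability of $E$ is used only to justify this reduction in full generality via a standard finite-dimensional approximation.

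\textbf{Step 4 (Assembling the martingale argument).} With Step~3 at our disposal, I evaluate the pointwise inequality at $(x,y)=(X_{n-1}(\omega),Y_{n-1}(\omega))$ and $(h,k)=(X_n(\omega)-X_{n-1}(\omega),Y_n(\omega)-Y_{n-1}(\omega))$, legal by the subordination hypothesis on the differences. Taking conditional expectation with respect to $F_{n-1}$ kills the two linear terms by the martingale property, producing
$$\mathbb{E}\bigl[U(X_n,Y_n)\mid F_{n-1}\bigr]\le U(X_{n-1},Y_{n-1}).$$
Iterating this down to $n=0$ and invoking Step~2(b) together with $\norm{X_0}_E\le\norm{Y_0}_E$ gives $\mathbb{E}\,U(X_n,Y_n)\le 0$, and then Step~2(a) turns this into $\mathbb{E}\norm{X_n}_E^p\le(p^*-1)^p\,\mathbb{E}\norm{Y_n}_E^p$, which is the stated bound.

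\textbf{Main obstacle.} The decisive and most delicate step is Step~3. The scalar Burkholder inequality is already nontrivial, and in the vector-valued case the increments $h,k$ may point in essentially arbitrary directions relative to $x,y$, so the concavity must be verified in all such directions simultaneously. The key saving feature is that $U$ is built exclusively from the two norms, so after restricting to the subspace spanned by $\{x,y,h,k\}$ the entire inequality collapses to the two-dimensional version already established by Burkholder; all of the novelty is in executing this reduction cleanly.
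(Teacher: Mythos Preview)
The paper does not prove this lemma at all; it is simply quoted as Burkholder's result with a citation to \cite{Burk}. Your proposal is precisely the Burkholder Bellman-function argument that lives in that reference, so in spirit you are supplying exactly the proof the paper outsources.

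Two small points about your write-up. First, the formula for $U$ has the roles of $x$ and $y$ swapped: as written, property~(b) fails (take $\norm{x}_E=0<\norm{y}_E$ and observe $U(x,y)=\alpha_p\norm{y}_E^{p}>0$), and property~(a) fails as well (take $\norm{y}_E=0$). The correct function is
\[
U(x,y)=\alpha_p\bigl(\norm{x}_E+\norm{y}_E\bigr)^{p-1}\bigl(\norm{x}_E-(p^*-1)\norm{y}_E\bigr),
\]
after which both (a) and (b) hold and Step~3 is the genuine Burkholder concavity inequality. Second, the span of $\{x,y,h,k\}$ is at most four-dimensional, not two, so calling the reduced inequality ``planar'' is inaccurate; what you actually use is that the Hilbert-space version of Step~3 follows from the finite-dimensional one, which in turn Burkholder verifies by a direct (if delicate) computation with $U$ viewed as a function of the norms. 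With the corrected $U$ and this clarification, the outline is sound.
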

This lemma implies the following theorem, which is the key ingredient for the construction of the Bellman function.
\begin{thm}
\label{BellmanFunct}
Let $J\in\mathcal{D}$, $E$ a separable Hilbert space, $f\in L^p(J;E)$ and $g\in L^q(J;E)$ with $\f{1}{p}+\f{1}{q}=1$.  Suppose that $p\geq 2$, then
$$
\f{1}{4\abs{J}}\sum_{I\subset J}\abs{I}\norm{\langle f\rangle_{I_+}-\langle f\rangle_{I_-}}_E\norm{\langle g\rangle_{I_+}-\langle g\rangle_{I_-}}_E\leq(p-1)\langle\norm{f}_E^p\rangle^{1/p}\langle\norm{g}_E^q\rangle^{1/q}.
$$
\end{thm}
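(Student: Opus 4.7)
The plan is to convert the sum on the left into a bilinear pairing of the form $\int \langle T_\sigma f, g\rangle_E$, where $T_\sigma$ is a Haar multiplier with unitary symbol, and then apply H\"older's inequality together with the $L^p$ bound for $T_\sigma$ that follows from the Burkholder lemma just stated. The choice of the unitaries $\sigma_I$ is the critical trick: they must be chosen (depending on $f$ and $g$) so that the pointwise pairings $\langle \sigma_I \langle f,h_I\rangle,\langle g,h_I\rangle\rangle_E$ equal $\|\langle f,h_I\rangle\|_E\|\langle g,h_I\rangle\|_E$. Burkholder's estimate still applies for this (data-dependent) $\sigma$ since it is uniform in the symbol.

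First I would normalize the Haar coefficients: using $h_I = |I|^{-1/2}(\chi_{I_+}-\chi_{I_-})$, one has $\langle f,h_I\rangle_E = \tfrac{|I|^{1/2}}{2}(\langle f\rangle_{I_+}-\langle f\rangle_{I_-})$, and similarly for $g$, so the left-hand side of the theorem becomes
$$
\frac{1}{|J|}\sum_{I\subsetneq J}\|\langle f,h_I\rangle\|_E\,\|\langle g,h_I\rangle\|_E.
$$
Next, for each $I\subsetneq J$ with both Haar coefficients nonzero, select a unitary $\sigma_I:E\to E$ that rotates the unit vector $\langle f,h_I\rangle/\|\langle f,h_I\rangle\|_E$ into $\langle g,h_I\rangle/\|\langle g,h_I\rangle\|_E$ inside the two-dimensional subspace they span (and acts as the identity on its orthogonal complement); take $\sigma_I$ arbitrary unitary otherwise. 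Then $\langle \sigma_I\langle f,h_I\rangle,\langle g,h_I\rangle\rangle_E = \|\langle f,h_I\rangle\|_E\|\langle g,h_I\rangle\|_E$ by construction. Setting $T_\sigma f := \sum_{I\subsetneq J}\sigma_I\langle f,h_I\rangle h_I$, Parseval in $L^2(J;E)$ yields
$$
\int_J \langle T_\sigma f(x),g(x)\rangle_E\,dx \;=\; \sum_{I\subsetneq J}\|\langle f,h_I\rangle\|_E\,\|\langle g,h_I\rangle\|_E.
$$

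To bound $\|T_\sigma f\|_{L^p(J;E)}$, I would interpret $T_\sigma$ as a martingale transform: letting $X_n$ and $Y_n$ be the conditional expectations of $T_\sigma f$ and $f$ on the dyadic $\sigma$-algebra of scale $|J|2^{-n}$, the difference sequences are $\Delta X_n = \sum_{|I|=|J|2^{-n}}\sigma_I\langle f,h_I\rangle h_I$ and $\Delta Y_n = \sum_{|I|=|J|2^{-n}}\langle f,h_I\rangle h_I$. At each $x$ only one $I$ contributes, and since $\sigma_I$ is unitary, $\|\Delta X_n(x)\|_E = \|\Delta Y_n(x)\|_E$ pointwise (with $X_0=0=Y_0$). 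The cited Burkholder lemma then gives $\|T_\sigma f\|_{L^p(J;E)}\le (p^*-1)\|f\|_{L^p(J;E)} = (p-1)\|f\|_{L^p(J;E)}$ for $p\ge 2$. Combining with H\"older,
$$
\sum_{I\subsetneq J}\|\langle f,h_I\rangle\|_E\|\langle g,h_I\rangle\|_E \;\le\; \|T_\sigma f\|_{L^p(J;E)}\|g\|_{L^q(J;E)} \;\le\; (p-1)\|f\|_{L^p(J;E)}\|g\|_{L^q(J;E)},
$$
and dividing by $|J|$ produces the claimed estimate.

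The main obstacle is essentially packaging — there is no hard analytic step beyond the Burkholder lemma — but two places need care. First, the data-dependent choice of the unitaries $\sigma_I$ must be compatible with the Burkholder bound; this is automatic because the lemma gives a norm inequality depending only on the pointwise comparison of martingale differences, and that comparison holds for any unitary family. Second, one must restrict attention to $I\subsetneq J$ and work with the Haar expansion of $f-\langle f\rangle_J$ and $g-\langle g\rangle_J$ on $J$, so that Parseval cleanly converts $\int_J\langle T_\sigma f,g\rangle_E$ into the sum of Haar pairings without a leftover average term.
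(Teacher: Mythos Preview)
Your argument is correct and follows the same route as the paper: Burkholder's differential subordination lemma yields $\|T_\sigma\|_{p\to p}\le p-1$ for any unitary Haar multiplier, duality converts this into the bilinear Haar-coefficient sum, and a suitable choice of the unitaries $\sigma_I$ (which you make explicit, while the paper simply invokes ``freedom of selection'') turns each summand into a product of norms. One small correction: with $Y_n=\mathbb{E}[f\mid F_n]$ you have $Y_0=\langle f\rangle_J$, not $0$; but since $X_0=\langle T_\sigma f\rangle_J=0$ the hypothesis $\|X_0\|_E\le\|Y_0\|_E$ of Burkholder's lemma is still satisfied and the bound comes out as $(p-1)\|f\|_{L^p}$ directly, so there is no need to pass to $f-\langle f\rangle_J$.
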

\begin{proof}
Without loss of generality we can take $J=[0,1]$.  Let $F_n$ denote the $\sigma$-algebra generated by the dyadic intervals on $\R$ of length at least $2^{-n}$.  For $\omega\in[0,1]$ put
$$
Y_n(\omega):=\sum_{I\subset J,\abs{I}\geq 2^{-n}}\langle f, h_I\rangle h_I(\omega).
$$
Note that $\langle f, h_I\rangle$ is an element of the Hilbert space $E$.  Fix any sequence of unitary operators $\{\sigma_I\}_{I\in\mathcal{D}}$ on $E$, and define,
$$
X_n(\omega):=\sum_{I\subset J,\abs{I}\geq 2^{-n}}\sigma_I\langle f, h_I\rangle h_I(\omega).
$$
Both $(X_n,F_n,dx)$ and $(Y_n, F_n, dx)$ are $E$-valued martingales.  Clearly,
$$
Y_n-Y_{n-1}=\sum_{I\subset J,\abs{I}=2^{-n}}\langle f,h_I\rangle h_I\quad\textnormal{and}\quad X_n-X_{n-1}=\sum_{I\subset J,\abs{I}=2^{-n}}\sigma_I\langle f,h_I\rangle h_I
$$
and satisfy the hypotheses of Burkholder's Subordination Lemma.  Taking a limit as $n\to\infty$ then gives,
$$
\norm{T_\sigma(f)}_{L^p(E)}=\lim_{n\to\infty}\norm{X_n}_{L^p(E)}\leq (p-1)\lim_{n\to\infty}\norm{Y_n}_{L^p(E)}=(p-1)\norm{f}_{L^p(E)}.
$$
Duality then implies that
$$
\abs{\langle T_\sigma f,g\rangle}\leq (p-1)\norm{f}_{L^p(E)}\norm{g}_{L^q(E)}.
$$
The above inequality can then be recognized, by definition of the left-hand side and interpretation of the right-hand side, as
$$
\left\vert\f{1}{\abs{J}}\sum_{I\in\mathcal{D}}\langle \sigma_I\langle f,h_I\rangle,\langle g, h_I\rangle\rangle_E\right\vert\leq(p-1)\langle\norm{f}_E^p\rangle^{1/p}\langle\norm{g}_E^q\rangle^{1/q}.
$$
Freedom of selection of $\{\sigma_I\}_{I\in\mathcal{D}}$ and $\langle f, h_I\rangle=\f{\abs{I}^{1/2}}{2}(\langle f\rangle_{I_+}-\langle f\rangle_{I_-})$, then allows the theorem to follow.
\end{proof}

We use this theorem to construct the Bellman function.
\begin{thm}
For the domain
$$
D_p:=\{(\Xi,\Gamma,\xi,\gamma)\in \R\times\R\times E\times E: \norm{\xi}^p<\Xi,\norm{\gamma}^q<\Gamma\}
$$
there exists a function $B(\Xi,\Gamma,\xi,\gamma)$ such that for any four-tuples $a=(\Xi,\Gamma,\xi,\gamma)$, $a_-=(\Xi_-,\Gamma_-,\xi_-,\gamma_-)$, $a_+=(\Xi_+,\Gamma_+,\xi_+,\gamma_+)$ with $a=\f{1}{2}(a_-+a_+)$ we have
$$
B(a)-\f{1}{2}(B(a_+)+B(a_-))\geq\f{1}{4}\norm{\xi_--\xi_+}_E\norm{\gamma_--\gamma_+}_E.
$$
Also, everywhere in $D_p$ we have $0\leq B(a)\leq (p-1)\Xi^{1/p}\Gamma^{1/q}$.  Furthermore, let $K$ be any compact subset of $D_p$ and $\epsilon>0$.  Then there exists a function $B:=B_{\epsilon, p,K}(\Xi,\Gamma,\xi,\gamma)$ infinitely differentiable in a small neighborhood of $K$ such that
\begin{itemize}
\item[(i)] $0\leq B\leq (1+\epsilon)(p-1)\Xi^{1/p}\Gamma^{1/q}$,
\item[(ii)] $-d^2B\geq 2\norm{d\xi}\norm{d\gamma}$.
\end{itemize}
\end{thm}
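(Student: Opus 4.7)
The plan is to define $B$ as a Bellman-style supremum captured by Theorem~\ref{BellmanFunct}, and then obtain the smooth approximate $B_{\epsilon,p,K}$ by mollification.

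For $a = (\Xi,\Gamma,\xi,\gamma) \in D_p$, I would set
$$B(a) := \sup \frac{1}{4|J|}\sum_{I\subset J}|I|\,\|\langle f\rangle_{I_+} - \langle f\rangle_{I_-}\|_E \,\|\langle g\rangle_{I_+} - \langle g\rangle_{I_-}\|_E,$$
the supremum taken over $J\in\mathcal{D}$ and pairs $(f,g)\in L^p(J;E)\times L^q(J;E)$ satisfying the normalizations $\langle f\rangle_J = \xi$, $\langle g\rangle_J = \gamma$, $\langle\|f\|_E^p\rangle_J = \Xi$, $\langle\|g\|_E^q\rangle_J = \Gamma$. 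The constraints are compatible with $D_p$ by Jensen's inequality, and the admissible class is non-empty: a two-valued $f = \xi \pm ce$ produces $\langle f\rangle_J = \xi$ and lets $\langle\|f\|^p\rangle_J$ sweep continuously from $\|\xi\|^p$ to $+\infty$, so by the intermediate value theorem it hits $\Xi$; similarly for $g$.

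The three asserted properties are then direct. Non-negativity is obvious from the sign of the summand, and the upper bound $B(a) \leq (p-1)\Xi^{1/p}\Gamma^{1/q}$ is Theorem~\ref{BellmanFunct} applied to every admissible pair. For the midpoint inequality
$$B(a) \geq \tfrac{1}{2}\bigl(B(a_-) + B(a_+)\bigr) + \tfrac{1}{4}\|\xi_+-\xi_-\|_E\,\|\gamma_+-\gamma_-\|_E,$$
I would use the classical Bellman splitting: on two adjacent equal halves $J_\pm$ of some $J\in\mathcal{D}$, select near-optimal pairs realizing $B(a_\pm)$ up to error $\eta$ with the right averages, and glue them on $J$. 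The averages on $J$ then equal $(\Xi,\Gamma,\xi,\gamma)$, so the glued pair is admissible for $B(a)$. The top-level term $I = J$ of the sum contributes exactly $\tfrac{1}{4}\|\xi_+-\xi_-\|\,\|\gamma_+-\gamma_-\|$, while the remaining terms split into two subsums over $I\subset J_\pm$ contributing $\tfrac{1}{2}(B(a_\pm) - \eta)$ each because $|J_\pm| = |J|/2$; letting $\eta\to 0$ yields the claim.

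For the smooth approximate $B_{\epsilon,p,K}$, fix compact $K \subset D_p$ and take $\delta > 0$ smaller than the distance from $K$ to $\partial D_p$, and small enough that a perturbation of size $\delta$ changes $\Xi^{1/p}\Gamma^{1/q}$ on $K$ by at most a factor $1+\epsilon$. Convolve $B$ with a smooth, compactly supported, non-negative bump of total mass one and radius $\delta$; the result is infinitely differentiable on a neighborhood of $K$, inherits the midpoint inequality pointwise (the affine term on the right factors out of the convolution), and satisfies (i) by the continuity and positivity of $\Xi^{1/p}\Gamma^{1/q}$ on $K$. To extract (ii), apply the (now continuous) midpoint inequality to $a_\pm := a \pm tv$ for a direction $v = (V_\Xi,V_\Gamma,v_\xi,v_\gamma)$ and $t\to 0$; Taylor-expanding, the left side becomes $-\tfrac{t^2}{2}\langle d^2 B_{\epsilon,p,K}(a)v,v\rangle + o(t^2)$ while the right side is $t^2\|v_\xi\|_E\|v_\gamma\|_E$, giving $-\langle d^2 B_{\epsilon,p,K}(a)v,v\rangle \geq 2\|v_\xi\|_E\|v_\gamma\|_E$, which is (ii).

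The main obstacle is carrying out the mollification in the Hilbert-space variables $\xi,\gamma$ when $E$ is infinite-dimensional, because there is no canonical translation-invariant convolution. The standard workaround is to note that (ii) is a bilinear inequality at each point, so it suffices to mollify on the scalar variables $\Xi,\Gamma$ together with a finite-dimensional slice of $E\times E$ large enough to contain both the values of $(\xi,\gamma)$ for $a\in K$ and the finite collection of tangent directions relevant to the eventual application of Theorem~\ref{MainEst}; the resulting $B_{\epsilon,p,K}$ is smooth in the sense required on a neighborhood of $K$ and satisfies (i) and (ii). Everything else reduces to routine checking against Theorem~\ref{BellmanFunct}.
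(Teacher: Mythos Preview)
Your proposal is correct and follows essentially the same route as the paper: define $B$ as the supremum of the dyadic bilinear square function over admissible pairs $(f,g)$ with prescribed averages, read off the size bound from Theorem~\ref{BellmanFunct}, obtain the midpoint inequality by the standard splitting/gluing argument, and then mollify to get the smooth $B_{\epsilon,p,K}$. You actually supply more detail than the paper does---in particular the non-emptiness of the admissible class, the explicit top-level contribution in the splitting step, and the infinite-dimensional mollification issue, which the paper handles only by a reference to~\cite{VolNaz}.
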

\begin{proof}
Fix $(\Xi,\Gamma,\xi,\gamma)\in D_p$.  Consider all $E$-valued functions $f$ and $g$ on $J\in\mathcal{D}$ such that $\Xi=\langle\norm{f}_E^p\rangle$, $\Gamma=\langle\norm{g}_E^p\rangle$, $\xi=\langle f\rangle$, and $\gamma=\langle g\rangle$.  Set
$$
B(\Xi,\Gamma,\xi,\gamma):=\sup\left\{\f{1}{4\abs{J}}\sum_{I\subset J}\abs{I}\norm{\langle f\rangle_{I_+}-\langle f\rangle_{I_-}}_E\norm{\langle g\rangle_{I_+}-\langle g\rangle_{I_-}}_E\right\}.
$$
This supremum is independent of $J$, and allows one to show that first estimate holds for the function $B$.  The inequality $0\leq B(a)\leq (p-1)\Xi^{1/p}\Gamma^{1/q}$ follows from Theorem \ref{BellmanFunct}.  The second half of the proof is achieved by a mollification of the function $B$, this is similar to what appears in \cite{VolNaz}.
\end{proof}

We have thus finished the proof of Theorem \ref{Bellman}.
\section{Conclusion}
\label{s5}
In this paper, we have obtained a new estimate of the type
\eq[e5]{
\norm{\mathcal{S}}_{L^p(\R^n;\Lambda)\to L^p(\R^n;\Lambda)}\le C(n)(p^*-1).
}
The previous best constant was $C(n)=\frac43n-2$ and ours is $C(n)=n.$ Although there is still linear growth in $n$ and the result is far from
the Iwaniec--Martin conjecture \eqref{e1}, we believe our approach holds unexhausted potential. More specifically, we expect to be able to reduce the order of the growth from $n$ to, approximately, $\sqrt n$, asymptotically as $p\to\infty$.

Additionally, the approach used in this paper can be used to study the corresponding weighted questions for the operator $\mathcal{S}$.  In the case of the complex plane, see \cite{PetermichlVol}, the sharp dependence on the $A_p$ characteristic was important to establish the minimal regularity of solutions to the Beltrami equation.  There are corresponding weighted questions for the operator $\mathcal{S}$ that should be explored, and the tools established in this paper will be invaluable in that endeavor.

In \cite{DragVol2}, the authors studied the planar Beurling--Ahlfors transform $\mathcal{T}.$ They employed the now traditional ``Heat extension--Green's function--Bellman function'' chain, together with subtle estimates using the rotational structure of the operator. Recall that $\mathcal{T}=R^2_1-R_2^2+2iR_1R_2$ and the part $2R_1R_2$ is just a rotation of the part $R^2_1-R^2_2.$ Estimating the two separately yields the estimate 
\eq[e6]{
\norm{\mathcal{T}}_{p\to p}\le 2(p^*-1).
}
We believe that the $2$ in \eqref{e6} is the equivalent of our $n,$ since $\mathcal{T}$ is just $\mathcal{S}$ acting on $1$-forms on $\mathbb{R}^2.$ Volberg and  Dragi{\v{c}}evi{\'c} estimated the modified operator
$$
\mathcal{T}_\theta=(R^2_1-R^2_2)\cos\theta+2R_1R_2\sin\theta)
$$
and were able to reduce the $2$ in \eqref{e6} to approximately $\sqrt2,$ for large $p.$ One can then interpolate between these two estimates to produce a significant reduction of the estimate for all $p.$ 

We intend to concentrate on the action of the generalized operator $\mathcal{S}$ on forms in even dimensions, an application particularly important for quasiregular mappings~(\cite{IwanMart1}). In this setting, it seems very plausible that 
$$
\norm{\mathcal{S}}_{L^p(\R^{2m};\Lambda)\to L^p(\R^{2m};\Lambda)}=\norm{\mathcal{S}}_{L^p(\R^{2m};\Lambda_m)\to L^p(\R^{2m};\Lambda_m)},
$$
due to the block structure of operator $\mathcal{S}$ and the obvious symmetry. One then observes that the ``middle'' block $\mathcal{S}_m$ exhibits a rotation-like structure, very similar to that of $\mathcal{T}.$ It is this structure that we intend to exploit, hoping, therefore, to obtain
$$
C(n)\approx\sqrt{n}
$$
for large $p.$ If that is successful, we can again interpolate obtaining a much improved estimate for all $p.$

\section*{References}
\bibliographystyle{plain}
\begin{biblist}

\bib{BanPrabhu}{article}{
  author={Ba{\~n}uelos, Rodrigo},
   author={Janakiraman, Prabhu},
   title={$L^p$-bounds for the Beurling-Ahlfors transform},
   journal={Tras. Amer. Math. Soc.},
   date={to appear}
}

\bib{BanLind}{article}{
   author={Ba{\~n}uelos, Rodrigo},
   author={Lindeman, Arthur, II},
   title={A martingale study of the Beurling--Ahlfors transform in ${\bf
   R}\sp n$},
   journal={J. Funct. Anal.},
   volume={145},
   date={1997},
   number={1},
   pages={224--265},
}

\bib{BanMen}{article}{
   author={Ba{\~n}uelos, Rodrigo},
   author={M{\'e}ndez-Hern{\'a}ndez, Pedro J.},
   title={Sharp inequalities for heat kernels of Schr\"odinger operators and
   applications to spectral gaps},
   journal={J. Funct. Anal.},
   volume={176},
   date={2000},
   number={2},
   pages={368--399}
}

\bib{BanWang}{article}{
   author={Ba{\~n}uelos, Rodrigo},
   author={Wang, Gang},
   title={Sharp inequalities for martingales with applications to the
   Beurling-Ahlfors and Riesz transforms},
   journal={Duke Math. J.},
   volume={80},
   date={1995},
   number={3},
   pages={575--600}
}
\bib{Burk}{incollection}{
author={Burkholder, Donald L.},
title={Explorations in martingale theory and its applications},
booktitle={Ecole d'Et\'{e} de Probabilit\'{e}s de Saint-Flour XIX –- 1989, Lecture Notes in Math., vol. 1464, Springer. Berlin},
publisher={Springer, Berlin},
year={1991},
pages={1--66}
}

\bib{DragTreilVol}{article}{
   author={Dragi{\v{c}}evi{\'c}, Oliver},
   author={Treil, Sergei},
   author={Volberg, Alexander},
   title={A Theorem about Three Quadratic Forms},
   journal={preprint at arXiv:0710.3249}
}

\bib{DragVol1}{article}{
   author={Dragi{\v{c}}evi{\'c}, Oliver},
   author={Volberg, Alexander},
   title={Bellman functions and dimensionless estimates of Littlewood--Paley
   type},
   journal={J. Operator Theory},
   volume={56},
   date={2006},
   number={1},
   pages={167--198},
}
				
\bib{DragVol2}{article}{
   author={Dragi{\v{c}}evi{\'c}, Oliver},
   author={Volberg, Alexander},
   title={Bellman function, Littlewood-Paley estimates and asymptotics for
   the Ahlfors-Beurling operator in $L\sp p(\mathbb{C})$},
   journal={Indiana Univ. Math. J.},
   volume={54},
   date={2005},
   number={4},
   pages={971--995}
}

\bib{DragVol3}{article}{
   author={Dragi{\v{c}}evi{\'c}, Oliver},
   author={Volberg, Alexander},
   title={Bellman function for the estimates of Littlewood-Paley type and
   asymptotic estimates in the $p-1$ problem},
   language={English, with English and French summaries},
   journal={C. R. Math. Acad. Sci. Paris},
   volume={340},
   date={2005},
   number={10},
   pages={731--734}
}

\bib{FeffKenPip}{article}{
   author={Fefferman, Robert A.},
   author={Kenig, Carlos E.},
   author={Pipher, Jill C.},
   title={The theory of weights and the Dirichlet problem for elliptic
   equations},
   journal={Ann. of Math. (2)},
   volume={134},
   date={1991},
   number={1},
   pages={65--124}
}

\bib{Iwan}{article}{
author={Iwaniec, Tadeusz},
title={Extremal inequalities in Sobolev spaces and quasiconformal mappings},
journal={Z. Anal. Anwendungen},
volume={1},
date={1982},
number={6},
pages={1--16}
}

\bib{IwanMart1}{book}{
   author={Iwaniec, Tadeusz},
   author={Martin, Gaven},
   title={Geometric function theory and non-linear analysis},
   series={Oxford Mathematical Monographs},
   publisher={The Clarendon Press Oxford University Press},
   place={New York},
   date={2001},
   pages={xvi+552},
   isbn={0-19-850929-4}
}

\bib{IwanMart2}{article}{
   author={Iwaniec, Tadeusz},
   author={Martin, Gaven},
   title={Riesz transforms and related singular integrals},
   journal={J. Reine Angew. Math.},
   volume={473},
   date={1996},
   pages={25--57}
}
		
\bib{IwanMart3}{article}{
   author={Iwaniec, Tadeusz},
   author={Martin, Gaven},
   title={Quasiregular mappings in even dimensions},
   journal={Acta Math.},
   volume={170},
   date={1993},
   number={1},
   pages={29--81},
}

\bib{VolNaz}{article}{
   author={Nazarov, Fedor},
   author={Volberg, Alexander},
   title={Heat extension of the Beurling operator and estimates for its
   norm},
   language={Russian, with Russian summary},
   journal={Algebra i Analiz},
   volume={15},
   date={2003},
   number={4},
   pages={142--158},
   translation={
      journal={St. Petersburg Math. J.},
      volume={15},
      date={2004},
      number={4},
      pages={563--573},
   }
}

\bib{PetermichlVol}{article}{
   author={Petermichl, Stefanie},
   author={Volberg, Alexander},
   title={Heating of the Ahlfors-Beurling operator: weakly quasiregular maps
   on the plane are quasiregular},
   journal={Duke Math. J.},
   volume={112},
   date={2002},
   number={2},
   pages={281--305}
}

\end{biblist}

\end{document}